\documentclass[11pt,twoside]{amsart}
\usepackage{mathrsfs}
\usepackage{amsmath}
\usepackage{amsthm}
\usepackage{amsfonts}
\usepackage{amssymb}
\usepackage{latexsym}
\usepackage[all]{xy}
\usepackage{graphicx}

\date{\empty}
\thanks{}
\pagestyle{plain}
\textheight= 21.6 true cm \textwidth =16 true cm
\allowdisplaybreaks[4] \footskip=15pt
\renewcommand{\uppercasenonmath}[1]{}

\topmargin=0pt \evensidemargin0pt \oddsidemargin0pt
\numberwithin{equation}{section} \theoremstyle{plain}
\newtheorem*{thm*}{Main Theorem}
\newtheorem{theorem}{Theorem}[section]
\newtheorem{corollary}[theorem]{Corollary}
\newtheorem*{corollary*}{Corollary}

\newtheorem*{claim*}{Claim}
\newtheorem{lemma}[theorem]{Lemma}
\newtheorem*{lemma*}{Lemma}
\newtheorem{proposition}[theorem]{Proposition}
\newtheorem*{proposition*}{Proposition}
\newtheorem{remark}[theorem]{Remark}
\newtheorem*{remark*}{Remark}
\newtheorem{example}[theorem]{Example}
\newtheorem*{example*}{Example}

\newtheorem*{question*}{Question}
\newtheorem{definition}[theorem]{Definition}
\newtheorem*{definition*}{Definition}

\newtheorem*{acknowledgements*}{ACKNOWLEDGEMENTS}





\def\sbmatrix{\left[\begin{array}}
\def\endsbmatrix{\end{array}\right]}


\begin{document}
\begin{center}
{\large  \bf New results on EP elements in rings with involution}\\
\vspace{0.8cm}   Long Wang$^{1}$\footnote{Corresponding author. E-mail: lwangmath@yzu.edu.cn.
The research is supported by the Ministry of Education, Science and Technological Development,
Republic of Serbia, grant no. 174007,
NSF of China (11901510),
NSF of Jiangsu Province of China (BK20170589),
China Postdoctoral Science Foundation Funded Project (2017M611920).}, Dijana Mosi\'{c}$^{2}$ and Yuefeng Gao$^{3}$\\
\vspace{0.5cm} {\small $^{1}$School of Mathematical Sciences, Yangzhou University,
Yangzhou 225002, China\\
$^{2}$Faculty of Sciences and Mathematics, University of Ni\v{s}, Ni\v{s}, Serbia\\
$^{3}$College of Science,  University of Shanghai for Science and Technology,
Shanghai 200093, China}
\end{center}

\bigskip

{ \bf  Abstract:}  \leftskip0truemm\rightskip0truemm
In this paper, we mainly give characterizations of EP elements in terms of equations.
In addition,  a related  notion named a central EP element is defined and investigated.
Finally, we focus on characterizations of a generalized EP element, i.e. *-DMP element.
\\{  \textbf{Keywords:}} Moore-Penrose inverse; Group inverse; Core inverse; EP element; $\ast$-DMP element.
\\\noindent { \textbf{2010 Mathematics Subject Classification:}} 15A09; 16W10.
 \bigskip


\section {  Introduction}

Let $\mathcal{R}$ be a ring with unit 1.
Recall that the Drazin inverse \cite{MPD1958} of $a \in \mathcal{R}$ is the element $x \in \mathcal{R}$ which satisfies
\begin{center}
$xax = x$,\quad $ax = xa$,\quad $a^{k} = a^{k+1}x$\quad for some $k \geq 0$.
\end{center}
The element $x$ above is unique if it exists and is denoted by $a^{D}$.
The least such $k$ is called the index of $a$, and denoted by $\text{ind}(a)$.
In particular, when $\text{ind}(a)=1$,
the Drazin inverse $a^{D}$ is called the group inverse of $a$ and it is denoted by $a^{\sharp}$.
The set of all Drazin (resp. group) invertible elements of $\mathcal{R}$ is denoted by $\mathcal{R}^{D}$ (resp. $\mathcal{R}^{\sharp}$).
In a ring $\mathcal{R}$, an involution $\ast: \mathcal{R} \rightarrow \mathcal{R}$
is an anti-isomorphism which satisfies $(a^{\ast})^{\ast} = a$, $(a + b)^{\ast} = a^{\ast} + b^{\ast}$ and
$(ab)^{\ast}= b^{\ast}a^{\ast}$ for all $a, b \in \mathcal{R}$. $\mathcal{R}$ is called a $\ast$-ring if $\mathcal{R}$  is a ring with an involution $\ast$.
In what follows, $\mathcal{R}$ is a $\ast$-ring.
An element $a \in \mathcal{R}$ is said to be Moore-Penrose invertible if the following equations:
\begin{center}
(1) $axa = a$, (2) $xax = x$, (3) $(ax)^{\ast} = ax$ and (4) $(xa)^{\ast} = xa$
\end{center}
have a common solution. Such a solution is unique if it exists,
and is denoted by $a^{\dag}$ as usual.
The set of all Moore-Penrose invertible elements of $\mathcal{R}$ will be denoted by $\mathcal{R}^{\dag}$.
If $x \in \mathcal{R}$ satisfies both Eqs. (1) and (3),
then $x$ is called  a $\{1, 3\}$-inverse of $a$ and is denoted by $a^{(1,3)}$.
The set of all $\{1, 3\}$-invertible elements of $\mathcal{R}$ is denoted by $\mathcal{R}^{\{1,3\}}$.
Similarly, if $x \in \mathcal{R}$ satisfies both Eqs. (1) and (4),
then $x$ is called a $\{1, 4\}$-inverse of $a$ and is denoted by $a^{(1,4)}$.
The set of all $\{1, 4\}$-invertible elements of $\mathcal{R}$ is denoted by $\mathcal{R}^{\{1,4\}}$.

The notion of core inverse was introduced by Baksalary and Trenkler \cite{BT2010} for a complex matrix in 2010.
Then, the notion of  core inverse was generalized to $\ast$-rings by Raki\'{c} et al.\cite{RDD2014}.
Xu, Chen and Zhang \cite{XCZ1} characterized core invertible elements in $\ast$-rings by three equations.
Let $a\in \mathcal{R}$. If there exists $x\in \mathcal{R}$ such that the following three equations hold:
\begin{equation}
 xa^2=a,~ax^2=x~\text{and}~(ax)^{\ast}=ax,
\end{equation}
then $a$ is called to be core invertible, $x$ is a core inverse of $a$.
The above $x$ is unique if it exists and is denoted by  $a^{\tiny\textcircled{\tiny \#}}$.
Gao and Chen \cite{GC} introduced the notion of pseudo core inverse as a generalization of the core inverse in $\ast$-rings.
Let $a\in \mathcal{R}$.
If there exists $n\in \mathbb{N}^{+}$ and the unique $x\in \mathcal{R}$ such that the following three equations hold:
\begin{equation}
 xa^{n+1}=a^n,~ax^2=x~\text{and}~(ax)^{\ast}=ax,
\end{equation}
then $a$ is called to be pseudo core invertible, $x$ is the  pseudo core inverse of $a$, denoted by  $a^{\scriptsize\textcircled{\tiny D}}$.
Recently, we defined and investigated right core invertible and right pseudo core invertible elements in $\ast$-rings.
An element $a\in \mathcal{R}$ is right core invertible \cite{WD1} if  there exists some $x\in \mathcal{R}$ such that
\begin{equation}
 axa=a,~ax^2=x~\text{and}~(ax)^{\ast}=ax.
\end{equation}
An element $a\in \mathcal{R}$ is right pseudo core invertible  \cite{WDG1}
if there exists some $x\in \mathcal{R}$ and $n\in \mathbb{N}^{+}$ such that
\begin{equation}
 axa^n=a,~ax^2=x~\text{and}~(ax)^{\ast}=ax.
\end{equation}
It is well known that $a\in \mathcal{R}$ is an EP element if $a \in \mathcal{R}^{\sharp} \cap \mathcal{R}^{\dag}$ and $a^{\sharp}=a^{\dag}$.
It should be pointed out that an EP element  is also known as a *-gMP element, see \cite{PP2004}.
As a matter of convenience, we use the terminology EP element instead of *-gMP element in this paper.
Many authors have published papers on EP elements, see \cite{CV2006,C2012,HK1976,KP2002,MDK2009,XCB1} for example.
In particular, Xu, Chen and Bent\'{i}tez \cite{XCB1} said that $a\in \mathcal{R}$ is an EP element if and only if there exists $x\in \mathcal{R}$ such that
\begin{equation}
xa^2=a,~ax^2=x~\text{and}~(xa)^{\ast}=xa.
\end{equation}
We use the notation $\mathcal{R}^{EP}$ to denote all the EP elements in $\mathcal{R}$.
$a\in \mathcal{R}$ is a *-DMP element \cite{PP2004}
if there exists a positive integer $n$ such that  $(a^{n})^{\sharp}$ and $(a^{n})^{\dag}$ exist with $(a^{n})^{\sharp}=(a^{n})^{\dag}$.
In other words, $a\in \mathcal{R}$ is a *-DMP element  if and only if there exists a positive integer $n$ such that $a^{n}$ is an EP element.
Moreover, $a\in \mathcal{R}$ is a *-DMP element  with index $n$ if and only if $n$ is the smallest positive integer  such that $a^{n}$ is an EP element.
Gao and Chen \cite{GC2018} studied *-DMP elements in $\ast$-semigroups and $\ast$-rings.

Focus on equations (1.1)-(1.5). Let $a\in \mathcal{R}$,
it is of interest to know what is equivalent to the following three systems of equations, respectively.
\begin{equation}
xa^2=a,~ax^2=x~\text{and}~(ax)^{\ast}=xa~\text{for~some}~x\in \mathcal{R}
\end{equation}

\begin{equation}
axa=a,~ax^2=x~\text{and}~(ax)^{\ast}=xa~\text{for~some}~x\in \mathcal{R}
\end{equation}
as well as
\begin{equation}
axa^n=a,~ax^2=x~\text{and}~(ax)^{\ast}=xa~\text{for~some}~x\in \mathcal{R}~\text{and}~n\in \mathbb{N}^{+}
\end{equation}
These are three questions shown in the attached table and we will give answers on these questions.

\section{ Characterizations of EP elements}
In this section, we give new characterizations of EP elements in terms of equations.
We begin with an auxiliary lemma.
\begin{lemma}\label{lem1}
Let $a \in \mathcal{R}$. If there exists some $x\in \mathcal{R}$ such that $a=xa^{2}$ and $(ax)^{\ast}=xa$,
then $xa=ax$.
\end{lemma}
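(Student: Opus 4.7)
The plan is to start with the relation $a=xa^{2}$, right-multiply by $x$, and feed the hypothesis $(ax)^{\ast}=xa$ into the result so as to rewrite $ax$ as a product of the form $y^{\ast}y$, which is automatically self-adjoint.

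Concretely, I would first multiply $a=xa^{2}$ on the right by $x$ to obtain $ax = xa^{2}x = (xa)(ax)$. Next, using the hypothesis $xa=(ax)^{\ast}$, I substitute to get $ax=(ax)^{\ast}(ax)$. Since any element of the form $y^{\ast}y$ is self-adjoint, applying the involution to both sides yields $(ax)^{\ast}=ax$. Finally, comparing with the hypothesis $(ax)^{\ast}=xa$ gives $ax=xa$, as desired.

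There is no real obstacle here; the only trick is to notice that right-multiplying the first relation by $x$ produces an expression in which the factor $xa$ can be replaced by $(ax)^{\ast}$ via the second hypothesis, after which self-adjointness of $(ax)^{\ast}(ax)$ closes the argument immediately.
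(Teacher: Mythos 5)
Your proof is correct and is essentially the same argument as the paper's: both substitute $a=xa^{2}$ to get $ax=(xa)(ax)$ and then use $(ax)^{\ast}=xa$ to identify this with $(ax)^{\ast}(ax)$, whose self-adjointness forces $ax=(ax)^{\ast}=xa$. The paper just writes the computation as a single chain of equalities starting from $xa=(ax)^{\ast}$ rather than isolating the ``$y^{\ast}y$ is Hermitian'' step, but the content is identical.
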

\begin{proof}
Since $a=xa^{2}$ and $(ax)^{\ast}=xa$, we obtain that
\begin{center}
$xa=(ax)^{\ast}=(xa^{2}x)^{\ast}=[(xa)(ax)]^{\ast}=(ax)^{\ast}(xa)^{\ast}=xaax=(xa^{2})x=ax$.
\end{center}
The proof is completed.
\end{proof}

Similarly, we can obtain the following remark.
\begin{remark}\label{rem1}
(1) Let $a \in \mathcal{R}$. If there exists $x\in \mathcal{R}$ such that $x=ax^{2}$ and $(ax)^{\ast}=xa$,
then $xa=ax$.

(2) Let $a \in \mathcal{R}$. If there exists $x\in \mathcal{R}$ such that $a=a^{2}x$ and $(ax)^{\ast}=xa$,
then $xa=ax$.

(3) Let $a \in \mathcal{R}$. If there exists $x\in \mathcal{R}$ such that $x=x^{2}a$ and $(ax)^{\ast}=xa$,
then $xa=ax$.
\end{remark}

Next, we will provide new characterizations of EP elements.
Moreover, from the following Theorem \ref{theo1}, we get the answer of Question 1 (see table below).

\begin{theorem}\label{theo1}
Let $a \in \mathcal{R}$. Then the following statements are equivalent:

(1) $a\in \mathcal{R}^{EP}$.

(2) There exists some $x\in \mathcal{R}$ such that $a=xa^{2}$ and $(ax)^{\ast}=xa$.

(3) There exists some $x\in \mathcal{R}$ such that $a=a^{2}x$ and $(ax)^{\ast}=xa$.

In this case, $a^{\sharp}=a^{\dag}=xax$.
\end{theorem}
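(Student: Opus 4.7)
The plan is to prove (1)$\Rightarrow$(2), (1)$\Rightarrow$(3) directly from the standard EP characterization, handle (3)$\Rightarrow$(2) by reducing, and then tackle (2)$\Rightarrow$(1) which will carry the main content together with the formula for the inverses.

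For (1)$\Rightarrow$(2) and (1)$\Rightarrow$(3), I would simply take $x=a^{\dag}=a^{\sharp}$. Then $xa^{2}=a^{\sharp}a^{2}=a$ and $a^{2}x=a^{2}a^{\sharp}=a$ cover the first equations in (2) and (3), while $ax=aa^{\dag}$ is self-adjoint and $xa=a^{\sharp}a=aa^{\sharp}=ax$, so $(ax)^{\ast}=ax=xa$ gives the second equation in both. No obstacle here.

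For (3)$\Rightarrow$(2) I would use Remark~\ref{rem1}(2): from $a=a^{2}x$ and $(ax)^{\ast}=xa$ we already have $ax=xa$, hence $a=a^{2}x=a(ax)=a(xa)=(ax)a=(xa)a=xa^{2}$, which gives (2) with the same $x$. So everything funnels into (2)$\Rightarrow$(1), which is the main step.

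For (2)$\Rightarrow$(1), the plan is to first apply Lemma~\ref{lem1} to obtain $ax=xa$, and then deduce the crucial identity $a=axa$ from $a=xa^{2}=(xa)a=(ax)a=axa$. With $a=axa$ and $ax=xa$ in hand, I would set $y=xax$ (as the theorem suggests) and verify that $y$ is simultaneously the group inverse and the Moore--Penrose inverse of $a$. Concretely: $aya=a(xax)a=(ax)(axa)=(ax)a=axa=a$; then $ay=axax=(axa)x=ax$, so $ay=xa=ya$, giving the commutation; $yay=y(ay)=(xax)(ax)=xa(xax)=\cdots=xax=y$ by repeated use of $axa=a$; and for the symmetry axioms, $ay=axax$ with $(ax)^{\ast}=xa$ yields $(ay)^{\ast}=(axax)^{\ast}=(ax)^{\ast}(ax)^{\ast}=xaxa=axax=ay$ (using $ax=xa$), and similarly $(ya)^{\ast}=ya$. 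This shows $y$ satisfies all four Moore--Penrose equations, commutes with $a$, and therefore is both $a^{\sharp}$ and $a^{\dag}$, so $a\in\mathcal{R}^{EP}$ with $a^{\sharp}=a^{\dag}=xax$.

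The main obstacle, such as it is, lies in the bookkeeping of the calculation $yay=y$, where without first establishing $a=axa$ one is stuck with a seven-factor string $xaxaxax$; the key is to notice that $axa=a$ allows collapse of any $axa$-substring, so $xaxaxax\to x(axa)xax=xaxax\to x(axa)x=xax$. Everything else is a direct substitution using $ax=xa$, $axa=a$, and $(ax)^{\ast}=xa$.
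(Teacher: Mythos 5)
Your proposal is correct and follows essentially the same route as the paper: both reduce everything to the commutation $ax=xa$ (via Lemma~\ref{lem1} / Remark~\ref{rem1}), deduce $a=axa$, and then verify directly that $y=xax$ satisfies the group and Moore--Penrose equations. The only difference is cosmetic — the paper closes the cycle through $(3)\Rightarrow(1)$ while you close it through $(2)\Rightarrow(1)$, which are mirror images of each other.
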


\begin{proof}
$(1)\Rightarrow (2)$ It is clear. Indeed, we only have to choose $x=a^{\sharp}=a^{\dag}$.

$(2)\Rightarrow (3)$ By Lemma \ref{lem1}, we have $xa=ax$ and consequently $a=xa^{2}=a^{2}x$.

$(3)\Rightarrow (1)$ Set $y=xax$. By Remark \ref{rem1}, we have $xa=ax$, which gives that $a=axa$, $(ax)^{\ast}=ax$ and $(xa)^{\ast}=xa$.
Then we can obtain that
$aya=axaxa=a$ and $yay=xaxaxax=xax=y$.
As $(ax)^{\ast}=ax$, we have $ay=axax=ax$, which means that $(ay)^{\ast}=ay$.
Moreover, $ya=xaxa=xa$, consequently $(ya)^{\ast}=ya$ and $ya=xa=ax=ay$.
Thus, we can obtain that $a$ is EP, and $y=a^{\sharp}=a^{\dag}$.
\end{proof}

\begin{remark}
The element $x$ in condition (2)(or (3)) of the above Theorem \ref{theo1} is not the group inverse (or the MP inverse) of $a$, in general.
Furtherly, $x$ is not unique.
For example, set $\mathcal{R}=\mathbb{Z}$ and $a=0$. Clearly $a^{\sharp}=a^{\dag}=0$, but we can choose $x=0$ or $x=1$.
\end{remark}

Similar to Theorem \ref{theo1}, it is necessary to consider the relationship between the following condition (1) and condition (2).

Condition (1): $a\in \mathcal{R}^{EP}$.

Condition (2): There exists some $x\in \mathcal{R}$ such that $x=ax^{2}$ and $(ax)^{\ast}=xa$.\\
It is easy to get $(1) \Rightarrow (2)$, but $(2) \nRightarrow (1)$.
For example: Set $\mathcal{R}=\mathbb{Z}$ and $a=2$. Clearly $a\not\in \mathcal{R}^{EP}$, but we can choose $x=0$.
In fact, condition (2) would imply that $x$ (instead of $a$) is a EP element.
Using additional assumption in condition (2), we will prove the equivalence in the following.

\begin{proposition}\label{prop1}
Let $a \in \mathcal{R}$. Then the following statements are equivalent:

(1) $a\in \mathcal{R}^{EP}$.

(2) There exists the unique $x\in \mathcal{R}$ such that $axa=a$, $x=ax^{2}$ and $(ax)^{\ast}=xa$.

In this case, $a^{\sharp}=a^{\dag}=x$.
\end{proposition}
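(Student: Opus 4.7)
The plan is to mirror the proof of Theorem \ref{theo1}, using Remark \ref{rem1}(1) to reduce the Moore--Penrose/group axioms to straightforward computations, and then to get uniqueness ``for free'' from the fact that any admissible $x$ must coincide with $a^{\dagger}$.

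For the implication $(2)\Rightarrow(1)$, I will start from $x=ax^{2}$ and $(ax)^{\ast}=xa$ and invoke Remark \ref{rem1}(1) to obtain the commutation relation $xa=ax$. With commutativity in hand, I would read off the four Moore--Penrose conditions one by one: $axa=a$ is given; $xax=(xa)x=(ax)x=ax^{2}=x$; $(ax)^{\ast}=xa=ax$; and then $(xa)^{\ast}=(ax)^{\ast}=ax=xa$. This shows $x=a^{\dagger}$. Next, for the group inverse, I would note that $xa=ax$ and $xax=x$ are already in place, and the missing identity $a^{2}x=a$ follows from $a=axa=a(xa)=a(ax)=a^{2}x$ (and similarly $xa^{2}=a$). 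Hence $x=a^{\sharp}$ as well, so $a$ is EP with $x=a^{\sharp}=a^{\dagger}$.

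For $(1)\Rightarrow(2)$, the candidate $x=a^{\sharp}=a^{\dagger}$ verifies all three equations trivially: $axa=a$ from the Moore--Penrose axiom (1), $x=ax^{2}$ because $ax=xa$ and $xax=x$, and $(ax)^{\ast}=ax=xa$ from the self-adjointness of $ax$ together with $ax=xa$. Uniqueness is then automatic from the argument above: any $x$ satisfying the three equations must be $a^{\dagger}$, which is a unique element.

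I do not foresee a serious obstacle here; the only subtlety is that, as emphasized in the discussion preceding the statement, merely assuming $x=ax^{2}$ and $(ax)^{\ast}=xa$ does not force $a$ to be EP (it only makes $x$ EP), so the role of the extra equation $axa=a$ is precisely to pin $a$ down. I will therefore highlight how the hypothesis $axa=a$ is used exactly once, to bridge from $a^{2}x=a(ax)=a(xa)$ back to $a$, thereby converting the ``$x$ is EP'' information into ``$a$ is EP'' information.
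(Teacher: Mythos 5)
Your proof is correct, and it shares the paper's key starting move: applying Remark \ref{rem1}(1) to the equations $x=ax^{2}$ and $(ax)^{\ast}=xa$ to obtain $ax=xa$. After that the two arguments diverge in a way worth noting. For $(2)\Rightarrow(1)$ the paper converts $axa=a$ and $ax=xa$ into $a=xa^{2}$ and then cites Theorem \ref{theo1} to conclude $a\in\mathcal{R}^{EP}$, followed by a separate computation ($x=a^{\sharp}a^{2}x^{2}=\cdots=a^{\sharp}$) to identify $x$ with $a^{\sharp}$; you instead verify the four Penrose equations and the group-inverse equations for $x$ directly, which gives $x=a^{\dag}=a^{\sharp}$ in one pass. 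More significantly, for uniqueness the paper runs a bespoke two-solution computation ($x=yax$, $y=xay$, then $x=y(xa)^{\ast}=\cdots=y$), whereas you observe that your $(2)\Rightarrow(1)$ analysis already forces every admissible $x$ to equal the Moore--Penrose inverse, so uniqueness is inherited from the uniqueness of $a^{\dag}$. Your route is more economical and self-contained (it does not lean on Theorem \ref{theo1}), at the cost of re-deriving facts that the paper prefers to quote; both are fully rigorous.
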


\begin{proof}
$(1)\Rightarrow (2)$ It is clear by letting $x=a^{\sharp}=a^{\dag}$.
Next, we prove the unique of $x$.
Suppose that there exist $x,~y$ such that $axa=a$, $x=ax^{2}$, $(ax)^{\ast}=xa$
and  $aya=a$, $y=ay^{2}$ and $(ay)^{\ast}=ya$.
Then $ax=xa$ and $ay=ya$ by Remark~\ref{rem1}.
Thus $x=ax^2=ayax^2=yax$ and similarly $y=xay$.
Hence $x=yax=y(xa)^{\ast}=y(xaya)^{\ast}=y(ya)^{\ast}=yay=ay^2=y.$

$(2)\Rightarrow (1)$ By Remark \ref{rem1}, one can see that $ax=xa$.
Note that $axa=a$, then we have $a=xa^{2}$, and consequently $a\in \mathcal{R}^{EP}$ by Theorem \ref{theo1}.

As $x=ax^{2}$, we have $ax=a^{2}x^{2}$ and consequently $x=a^{\sharp}a^{2}x^{2}=a^{\sharp}ax=(a^{\sharp})^{2}a^{2}x=(a^{\sharp})^{2}a=a^{\sharp}$.
\end{proof}

From Proposition \ref{prop1}, we get the answer of Question 2 (see table below).
Moreover, motived by Theorem \ref{theo1},
it is necessary to consider wether we can delete the condition of $x=ax^{2}$ in Proposition \ref{prop1}.
The problem is provided as follows:
\begin{center}
there exists some $x\in \mathcal{R}$ such that $axa=a$ and $(ax)^{\ast}=xa$ $\overset{?}{\Longrightarrow}$ $a\in \mathcal{R}^{EP}$.
\end{center}
Further, if there exists some $x\in R$ such that $axa=a$ and $(ax)^{\ast}=xa$, then one can see $aya=a$, $yay=y$ and $(ay)^{\ast}=ya$, where $y=xax$.
In fact, this question have a more detailed expression:
\begin{center}
there exists some $x\in R$ such that $axa=a$, $xax=x$ and $(ax)^{\ast}=xa$ $\overset{?}{\Longrightarrow}$ $a\in \mathcal{R}^{EP}$?
\end{center}

\begin{example}\label{eg1}
Let $\mathcal{R} = M_{2}(\mathbb{C})$, and set the involution of $\mathcal{R}$ as the transpose of matrices.
Take $a=\left(
          \begin{array}{cc}
            1 & i \\
            i & -1 \\
          \end{array}
        \right)$ and
$x=\left(
     \begin{array}{cc}
       2 & 0 \\
       0 & 1 \\
     \end{array}
   \right)$.
It is not difficult to work out that
$ax=(xa)^{\ast}$ and $axa=a$.
But we can check that $a^{\ast}a=O_{2}$, which implies that $a$ is not EP.
\end{example}

Similarly to the proof of Proposition~\ref{prop1}, we can get the following result.
\begin{proposition}
Let $a \in \mathcal{R}$. Then the following statements are equivalent:

(1) $a\in \mathcal{R}^{EP}$.

(2) There exists the unique $x\in \mathcal{R}$ such that $axa=a$, $x=x^{2}a$ and $(ax)^{\ast}=xa$.

(3) There exists the unique $x\in \mathcal{R}$ such that $a^2x=a$, $x=x^{2}a$ and $(ax)^{\ast}=xa$.

(4) There exists the unique $x\in \mathcal{R}$ such that $xa^2=a$, $x=x^{2}a$ and $(ax)^{\ast}=xa$.

(5) There exists the unique $x\in \mathcal{R}$ such that $a^2x=a$, $x=ax^{2}$ and $(ax)^{\ast}=xa$.

(6) There exists the unique $x\in \mathcal{R}$ such that $xa^2=a$, $x=ax^{2}$ and $(ax)^{\ast}=xa$.

(7) There exists the unique $x\in \mathcal{R}$ such that $a^2x=a$, $x=xax$ and $(ax)^{\ast}=xa$.

(8) There exists the unique $x\in \mathcal{R}$ such that $xa^2=a$, $x=xax$ and $(ax)^{\ast}=xa$.

In this case, $a^{\sharp}=a^{\dag}=x$.

\end{proposition}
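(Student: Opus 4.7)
The plan is to mirror Proposition~\ref{prop1}, proving the seven implications $(i)\Rightarrow(1)$ uniformly and checking the reverse directions $(1)\Rightarrow(i)$ by the obvious choice $x=a^{\sharp}=a^{\dag}$. In that forward direction, an EP element trivially satisfies $ax=xa$, $axa=a$, $xax=x$, $(ax)^{\ast}=ax$ and $(xa)^{\ast}=xa$, from which each of the three equations listed in any of (2)--(8) falls out immediately, with uniqueness guaranteed by uniqueness of $a^{\dag}$.

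For each $(i)\Rightarrow(1)$, my first step is always to extract commutativity $ax=xa$ by invoking either Lemma~\ref{lem1} or one of the three items of Remark~\ref{rem1}. The pairing is transparent: conditions (2), (3) and (4) contain $x=x^{2}a$, so Remark~\ref{rem1}(3) applies; conditions (5) and (6) contain $x=ax^{2}$, so Remark~\ref{rem1}(1) applies; condition (7) contains $a=a^{2}x$, so Remark~\ref{rem1}(2) applies; and condition (8) contains $a=xa^{2}$, so Lemma~\ref{lem1} applies directly. Once $ax=xa$ is available, a short substitution converts the remaining one-sided equation (one of $axa=a$, $a^{2}x=a$ or $xa^{2}=a$) into $xa^{2}=a$, and Theorem~\ref{theo1} immediately gives $a\in\mathcal{R}^{EP}$.

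To identify $x$ as $a^{\dag}=a^{\sharp}$ and thereby obtain uniqueness, I plan to verify that in every case $x$ satisfies all four Moore-Penrose equations together with $ax=xa$. The equalities $(ax)^{\ast}=ax$ and $(xa)^{\ast}=xa$ follow from $(ax)^{\ast}=xa$ once commutativity is known; the relation $axa=a$ is either a hypothesis or emerges from $a^{2}x=a$ (resp.\ $xa^{2}=a$) by inserting $xa=ax$; and $xax=x$ is either a hypothesis or comes from rewriting $x=ax^{2}=x\cdot ax$ (resp.\ $x=x^{2}a=x\cdot xa$) using commutativity. Since $a^{\dag}$ is unique, so is $x$.

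The main obstacle I foresee is not mathematical but organizational: there are seven independent implications, and the precise first step (which lemma or remark item to invoke, and which substitution to perform) varies case by case. However, no new tool beyond Lemma~\ref{lem1}, Remark~\ref{rem1} and Theorem~\ref{theo1} is required, and the manipulations are entirely parallel to the proof already written out for Proposition~\ref{prop1}.
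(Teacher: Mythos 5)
Your proposal is correct and follows exactly the route the paper intends: the paper gives no explicit proof here, stating only that the result follows ``similarly to the proof of Proposition~2.5,'' and your case-by-case use of Lemma~2.1 and Remark~2.2 to extract $ax=xa$, followed by Theorem~2.3 and the identification $x=a^{\dag}=a^{\sharp}$ for uniqueness, is a faithful and correct expansion of that sketch. The only cosmetic difference is that the paper's Proposition~2.5 proves uniqueness by comparing two solutions directly, whereas you obtain it by showing any solution satisfies the Moore--Penrose equations; both are valid and essentially equivalent.
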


From Example~\ref{eg1}, we know that ``$axa=a$ and $(ax)^{\ast}=xa$'' may not imply that $a$ is an EP element, however,  $a$ will be an EP element if we add an extra condition.

\begin{lemma}\label{lem2}\emph{\cite{KP2002}}
(1) Let $\mathcal{R}$ be an associative ring with unit. Then $a\in \mathcal{R}^{\sharp}$ is equivalent to $a\in a^{2}\mathcal{R} \cap \mathcal{R}a^{2}$.

(2) Let $\mathcal{R}$ be a $\ast$-ring and $a\in \mathcal{R}^{\sharp}$. Then $a\in \mathcal{R}^{EP}$ is equivalent to $(a^{\sharp}a)^{\ast}=a^{\sharp}a$.
\end{lemma}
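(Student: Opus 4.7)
The plan is to handle the two parts separately, since part (1) requires a small structural construction while part (2) is essentially a verification of the Moore--Penrose equations.

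For part (1), the forward direction is immediate: if $a$ has a group inverse $x = a^{\sharp}$, then $a = axa = a^{2}x \in a^{2}\mathcal{R}$ and $a = axa = xa^{2} \in \mathcal{R}a^{2}$ by the commutation $ax = xa$. For the converse, I would start from $a = a^{2}u = va^{2}$ and first show that the two natural idempotent candidates agree. Indeed $va = v(a^{2}u) = (va^{2})u = au$, so setting $e := au = va$ one gets $ea = va\cdot a = va^{2} = a$ and $ae = a\cdot au = a^{2}u = a$, from which $e^{2} = e\cdot au = (ea)u = au = e$. Hence $e$ is an idempotent with $ea = ae = a$, and the problem reduces to inverting $a$ inside the corner ring $e\mathcal{R}e$. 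Putting $u' = eue$ and $v' = eve$, the identities $au' = (ae)(ue) = (au)e = e^{2} = e$ and $v'a = (ev)(ea) = e\cdot va = e^{2} = e$ show that $a$ has a right and a left inverse in $e\mathcal{R}e$; these must coincide in a single $w \in e\mathcal{R}e$ with $aw = wa = e$, and I would verify that this $w$ is the group inverse of $a$ since $awa = ea = a$, $waw = ew = w$, and $aw = wa$.

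For part (2), the forward direction follows directly from the Moore--Penrose axioms: if $a \in \mathcal{R}^{EP}$ then $a^{\sharp} = a^{\dag}$, so $(a^{\sharp}a)^{\ast} = (a^{\dag}a)^{\ast} = a^{\dag}a = a^{\sharp}a$. Conversely, assume $a \in \mathcal{R}^{\sharp}$ and $(a^{\sharp}a)^{\ast} = a^{\sharp}a$. Since $a^{\sharp}$ commutes with $a$, we have $a^{\sharp}a = aa^{\sharp}$, and therefore $(aa^{\sharp})^{\ast} = aa^{\sharp}$ as well. I would then check that $x = a^{\sharp}$ fulfils all four Moore--Penrose equations: $axa = a$ and $xax = x$ are the group-inverse axioms, while $(ax)^{\ast} = ax$ and $(xa)^{\ast} = xa$ are precisely the two Hermitian conditions just established. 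Hence $a^{\dag}$ exists and equals $a^{\sharp}$, so $a \in \mathcal{R}^{EP}$.

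The only genuinely delicate step in the whole argument is the extraction of the idempotent $e = au = va$ in part (1) and the verification that $a$ is a two-sided unit in $e\mathcal{R}e$; once this structural fact is in place, building the group inverse and, in part (2), matching the four Moore--Penrose axioms are both mechanical.
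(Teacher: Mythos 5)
Your proof is correct in both parts: the extraction of the idempotent $e=au=va$ from $a=a^{2}u=va^{2}$, the reduction to invertibility in the corner ring $e\mathcal{R}e$, and the verification of the four Moore--Penrose equations for $x=a^{\sharp}$ are all standard and check out line by line. Note, however, that the paper offers no proof of this lemma at all --- it is quoted directly from Koliha and Patr\'{i}cio \cite{KP2002} --- so there is no internal argument to compare against; your write-up simply supplies the (correct, essentially textbook) derivation that the paper delegates to its reference.
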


\begin{theorem}\label{theo2}
Let $a\in \mathcal{R}^{\{1,3\}}$. Then the following statements are equivalent:

(1) $a\in \mathcal{R}^{EP}$;

(2) There exists some $x\in \mathcal{R}$ such that $a=axa$ and $(ax)^{\ast}=xa$.
\end{theorem}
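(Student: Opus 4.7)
The easy direction $(1) \Rightarrow (2)$ I would dispatch in one line by taking $x = a^\sharp = a^\dag$: since $a$ is EP, $aa^\dag$ is self-adjoint and coincides with $a^\dag a$, so $(ax)^* = ax = xa$.

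The substance of the theorem is $(2) \Rightarrow (1)$. Let $b$ be a $\{1,3\}$-inverse of $a$, so $aba = a$ and $(ab)^* = ab$, and let $x$ be as in (2). My plan is to manufacture a single element $y$ that is simultaneously the group inverse and the Moore--Penrose inverse of $a$; by definition, that forces $a$ to be EP. The first (and in my view principal) step is to establish $a = a^2b$. Rewriting $(ax)^* = xa$ as $xa = x^*a^*$ gives $a = axa = a(xa) = ax^*a^*$. Separately, taking $*$ of $aba = a$ and using $b^*a^* = (ab)^* = ab$ yields $a^* = a^*b^*a^* = a^*ab$. Substituting back, $a = ax^*(a^*ab) = (ax^*a^*)ab = a\cdot ab = a^2 b$. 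In particular $a\cdot ab = a$, so the self-adjoint idempotent $ab$ is both a left and a right identity for $a$.

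With this in hand, two mixed identities follow at once: multiplying $axa = a$ on the right by $b$ gives $(ax)(ab) = ab$; applying the involution, using $(ab)^* = ab$ and $(ax)^* = xa$, gives $(ab)(xa) = ab$. I then propose the candidate $y = abxab$ and verify directly that $ay = ab$, $ya = ab$, $aya = a$, and $yay = y$, where the first two use the mixed identities above, $aya$ uses $a\cdot ab = a$, and $yay$ follows from $(ab)^2 = ab$. Because $ay = ya = ab$ is a self-adjoint idempotent and $y$ commutes with $a$, the element $y$ satisfies all four Penrose equations together with $ay = ya$, so $y = a^\dag = a^\sharp$ and $a \in \mathcal{R}^{EP}$. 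The one place where careful thought is needed is the derivation $a = a^2b$, which is how the asymmetric hypothesis $(ax)^* = xa$ interacts with the $\{1,3\}$-inverse; once that relation is available, everything else is a forced computation inside the corner ring $(ab)\mathcal{R}(ab)$, where $a$ becomes invertible with inverse $y$.
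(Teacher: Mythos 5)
Your proof is correct, and its second half takes a genuinely different route from the paper's. Both arguments turn on the same pivotal relation $a=a^{2}a^{(1,3)}$, extracted from $(ax)^{*}=xa$ together with the self-adjointness of $aa^{(1,3)}$; the paper obtains it by computing $xa=xa^{2}a^{(1,3)}$ and left-multiplying by $a$, while you obtain it from $a=ax^{*}a^{*}$ and $a^{*}=a^{*}ab$ --- both derivations are valid. From that point the paper also proves $a=aa^{(1,3)}xa^{2}\in\mathcal{R}a^{2}$ and then invokes the quoted Koliha--Patr\'{i}cio lemma twice: membership in $a^{2}\mathcal{R}\cap\mathcal{R}a^{2}$ yields $a\in\mathcal{R}^{\sharp}$, and the Hermiticity of $a^{\sharp}a=aa^{(1,3)}$ yields EP. You bypass that lemma entirely by exhibiting the explicit element $y=abxab$ (with $b=a^{(1,3)}$) and checking by hand that $ay=ya=ab$, $aya=a$, $yay=y$, and that $ab$ is a self-adjoint idempotent; I verified these computations, including the two mixed identities $(ax)(ab)=ab$ and $(ab)(xa)=ab$, and they all go through. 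Your version is more elementary and self-contained, and it has the added benefit of producing an explicit formula $a^{\sharp}=a^{\dag}=a^{(1,3)}axa a^{(1,3)}$ in terms of the data; the paper's version is shorter granted the external lemma and matches the template it reuses for the subsequent results on $a\in a^{2}\mathcal{R}$ and $a\in\mathcal{R}a^{2}$.
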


\begin{proof}
We only have to prove $(2)\Rightarrow (1)$.
As $(ax)^{\ast}=xa$, it follows that
$xa=(ax)^{\ast}=(aa^{(1,3)}ax)^{\ast}=(ax)^{\ast}(aa^{(1,3)})^{\ast}=xaaa^{(1,3)}$.
Multiplying on the left by $a$ gives $a=a^{2}a^{(1,3)}\in a^{2}\mathcal{R}$.
Moreover, $aa^{(1,3)}=axaa^{(1,3)}=(axaa^{(1,3)})^{\ast}=aa^{(1,3)}(ax)^{\ast}=aa^{(1,3)}xa$.
Multiplying on the right by $a$ gives $a=aa^{(1,3)}xa^{2}\in \mathcal{R}a^{2}$.
So,  by Lemma \ref{lem2},
it implies that $a\in a^{2}\mathcal{R} \cap \mathcal{R}a^{2}$ and $a\in \mathcal{R}^{\sharp}$.
Note that $a=a^{2}a^{(1,3)}$. Then this shows $a^{\sharp}a=a^{\sharp}a^{2}a^{(1,3)}=aa^{(1,3)}$,
which implies that $(a^{\sharp}a)^{\ast}=a^{\sharp}a$, and,  by Lemma \ref{lem2}, $a\in \mathcal{R}^{EP}$.
\end{proof}

Similarly, we can obtain the following results whenever $a\in \mathcal{R}^{\{1,4\}}$.

\begin{proposition}\label{prop2}
Let $a\in \mathcal{R}^{\{1,4\}}$. Then the following statements are equivalent:

(1) $a\in \mathcal{R}^{EP}$;

(2) There exists some $x\in \mathcal{R}$ such that $a=axa$ and $(ax)^{\ast}=xa$.
\end{proposition}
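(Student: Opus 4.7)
The plan is to mirror the proof of Theorem~\ref{theo2}, with the roles of left and right multiplication interchanged, since a $\{1,4\}$-inverse satisfies the dual Hermiticity $(a^{(1,4)}a)^{\ast} = a^{(1,4)}a$ in place of $(aa^{(1,3)})^{\ast} = aa^{(1,3)}$. The direction $(1)\Rightarrow(2)$ is immediate upon taking $x = a^{\sharp} = a^{\dag}$, so all of the work lies in $(2)\Rightarrow(1)$, and the target is to verify the two containments of Lemma~\ref{lem2}(1) together with the Hermiticity condition of Lemma~\ref{lem2}(2).

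For $(2)\Rightarrow(1)$, I would first use $aa^{(1,4)}a = a$ to write $xa = xa\cdot a^{(1,4)}a$, then take adjoints and apply $(xa)^{\ast} = ax$ together with $(a^{(1,4)}a)^{\ast} = a^{(1,4)}a$ to deduce $ax = a^{(1,4)}a\cdot ax$. Multiplying on the right by $a$ and using $axa = a$ collapses this to $a = a^{(1,4)}a^{2}$, so $a \in \mathcal{R}a^{2}$. Next, to obtain $a \in a^{2}\mathcal{R}$, I would start from $a^{(1,4)}a = (a^{(1,4)}a)^{\ast}$, substitute $a = axa$ on the inside to get $(a^{(1,4)}\cdot axa)^{\ast}$, expand the adjoint, and use $a^{\ast}(a^{(1,4)})^{\ast} = a^{(1,4)}a$ to arrive at $a^{(1,4)}a = a^{\ast}x^{\ast}\cdot a^{(1,4)}a$. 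Since $(ax)^{\ast} = xa$ gives $a^{\ast}x^{\ast} = ax$, this becomes $a^{(1,4)}a = ax\cdot a^{(1,4)}a$, and left multiplication by $a$ yields $a = a^{2}x\cdot a^{(1,4)}a \in a^{2}\mathcal{R}$. Lemma~\ref{lem2}(1) then supplies $a \in \mathcal{R}^{\sharp}$, after which $aa^{\sharp} = a^{(1,4)}a^{2}\cdot a^{\sharp} = a^{(1,4)}a$ is Hermitian, and $aa^{\sharp} = a^{\sharp}a$ together with Lemma~\ref{lem2}(2) delivers $a \in \mathcal{R}^{EP}$.

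The step I expect to be the main obstacle is the derivation of $a \in a^{2}\mathcal{R}$. In Theorem~\ref{theo2} the Hermitian idempotent $aa^{(1,3)}$ sits naturally on the right, so inserting it and taking a single adjoint produces an $\mathcal{R}a^{2}$-factor almost automatically. Here the Hermitian idempotent $a^{(1,4)}a$ lives on the left of $a$, so the adjoint manipulation must be rearranged: one must first expand the inner $a$ in $a^{(1,4)}\cdot a$ as $axa$ so that taking the adjoint deposits an $a^{\ast}x^{\ast}$ on the left, and only then convert that factor via $(ax)^{\ast} = xa$ into the $ax$ needed to produce an $a^{2}\mathcal{R}$-factorization after a final left multiplication by $a$.
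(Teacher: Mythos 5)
Your proof is correct and is exactly what the paper intends: the paper gives no explicit argument for Proposition \ref{prop2}, stating only that it follows ``similarly'' to Theorem \ref{theo2}, and your argument is precisely that dualization, carried out correctly (the identities $a=a^{(1,4)}a^2$ and $a=a^2x\,a^{(1,4)}a$ both check out, and the conclusion via Lemma \ref{lem2} with $a^{\sharp}a=aa^{\sharp}=a^{(1,4)}a$ Hermitian is sound). No issues.
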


It is well known that $a\in \mathcal{R}^{\dag}$ if and only if $a\in \mathcal{R}^{\{1,3\}} \cap \mathcal{R}^{\{1,4\}}$.
By Theorem \ref{theo2} and Proposition \ref{prop2}, we can obtain the following result.

\begin{corollary}\label{cor1}
Let $a\in \mathcal{R}^{\dag}$. Then the following statements are equivalent:

(1) $a\in \mathcal{R}^{EP}$;

(2) There exists some $x\in \mathcal{R}$ such that $a=axa$ and $(ax)^{\ast}=xa$.
\end{corollary}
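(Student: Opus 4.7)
The plan is essentially an observation: the hypothesis $a\in\mathcal{R}^{\dag}$ is stronger than either of the hypotheses used in Theorem~\ref{theo2} and Proposition~\ref{prop2}, so the corollary is an immediate consequence of either of those results. I would begin by recalling the well-known fact stated in the text just before the corollary, namely $\mathcal{R}^{\dag}=\mathcal{R}^{\{1,3\}}\cap\mathcal{R}^{\{1,4\}}$, which in particular gives the inclusions $\mathcal{R}^{\dag}\subseteq\mathcal{R}^{\{1,3\}}$ and $\mathcal{R}^{\dag}\subseteq\mathcal{R}^{\{1,4\}}$.

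With that in hand, the forward direction $(1)\Rightarrow(2)$ follows by the standard choice $x=a^{\sharp}=a^{\dag}$, since then $a=axa$ holds trivially and $(ax)^{\ast}=(aa^{\dag})^{\ast}=aa^{\dag}=a^{\sharp}a=xa$. For the reverse direction $(2)\Rightarrow(1)$, I would simply invoke Theorem~\ref{theo2}: assuming $a\in\mathcal{R}^{\dag}\subseteq\mathcal{R}^{\{1,3\}}$, the existence of some $x$ satisfying $a=axa$ and $(ax)^{\ast}=xa$ yields $a\in\mathcal{R}^{EP}$ by that theorem. Alternatively one could cite Proposition~\ref{prop2} via the inclusion $\mathcal{R}^{\dag}\subseteq\mathcal{R}^{\{1,4\}}$.

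Because the statement is a direct corollary, there is no real obstacle to overcome. The only substantive point is to notice that the Moore--Penrose invertibility hypothesis subsumes the weaker $\{1,3\}$- or $\{1,4\}$-invertibility hypothesis already treated, so no new computation is required and the proof amounts to a one-line reduction to Theorem~\ref{theo2}.
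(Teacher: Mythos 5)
Your proposal is correct and matches the paper's own argument: the paper likewise derives this corollary directly from Theorem~\ref{theo2} (and Proposition~\ref{prop2}) via the identity $\mathcal{R}^{\dag}=\mathcal{R}^{\{1,3\}}\cap\mathcal{R}^{\{1,4\}}$. No issues.
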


Next, we will consider whether we can change the condition $a\in \mathcal{R}^{\dag}$
into $a\in \mathcal{R}^{\sharp}$ in Corollary \ref{cor1}.

\begin{theorem}\label{theo3}
Let $a\in a^{2}\mathcal{R}$. Then the following statements are equivalent:

(1) $a\in \mathcal{R}^{EP}$;

(2) There exists some $x\in \mathcal{R}$ such that $a=axa$ and $(ax)^{\ast}=xa$.
\end{theorem}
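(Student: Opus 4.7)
The forward direction $(1) \Rightarrow (2)$ is immediate with $x = a^\sharp = a^\dag$. For $(2) \Rightarrow (1)$, the plan is to apply Lemma~\ref{lem2}: I will first establish group invertibility by showing $a \in \mathcal{R}a^2$, and then verify $(a^\sharp a)^* = a^\sharp a$. Throughout I fix $s \in \mathcal{R}$ with $a = a^2 s$, as provided by the hypothesis $a \in a^2\mathcal{R}$.

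For the group-invertibility step, the key move is to take the involution of $a = axa$ and use $(ax)^* = xa$ to obtain $a^* = a^* xa$ (the same manipulation that begins the proof of Lemma~\ref{lem1}, but stopped one step earlier). Right-multiplying by $a$ gives $a^*a = a^* x a^2$, which already places $a^*a$ inside $\mathcal{R}a^2$. The bridge back to $a$ itself comes from right-multiplying this equation by $s$: since $a^2 s = a$, the identity collapses to $a^* a s = a^*$, and one more application of the involution yields $a = s^* a^* a$. Substituting $a^*a = a^* x a^2$ then displays $a$ as $(s^*a^*x)\, a^2 \in \mathcal{R}a^2$; combined with $a \in a^2\mathcal{R}$, Lemma~\ref{lem2}(1) supplies $a \in \mathcal{R}^\sharp$.

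For the self-adjointness step I reuse the identity $a = s^*a^*a$ in a different way: right-multiplying by $s$ gives $as = s^*a^*(as) = (as)^*(as)$, and applying the involution to this yields $(as)^* = (as)^*(as)$; comparing the two identities forces $as = (as)^*$. To connect this to the group inverse, note that $a^\sharp a^2 = a$ implies the right annihilators of $a$ and $a^2$ coincide, so the equation $a^2 s = a = a^2 a^\sharp$ forces $as = aa^\sharp = a^\sharp a$. Therefore $(a^\sharp a)^* = (as)^* = as = a^\sharp a$, and Lemma~\ref{lem2}(2) delivers $a \in \mathcal{R}^{EP}$.

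The main obstacle I anticipate is finding the right way to couple the witness $s$ of $a \in a^2\mathcal{R}$ with the involution identity $a^* = a^*xa$: the clean consequence $a = s^*a^*a$ is what drives both halves of the argument, and without it one is tempted to chase a $\{1,3\}$-inverse of $a$ directly (as in Theorem~\ref{theo2}), which is not obviously available here. The small bootstrap $as = (as)^*(as) \Longrightarrow (as)^* = (as)^*(as) \Longrightarrow (as)^* = as$ is short but easy to miss before $a = s^*a^*a$ has been isolated.
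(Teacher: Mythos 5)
Your proof is correct, and it closes the argument by a genuinely different route than the paper's. Both proofs pivot on the same key identity: writing $a=a^{2}t$ and combining $a=axa$ with $(ax)^{\ast}=xa$ to extract $a=t^{\ast}a^{\ast}a$ (the paper gets it by expanding $ax=(xa^{2}t)^{\ast}=t^{\ast}a^{\ast}ax$ and multiplying on the right by $a$; you get it by right-multiplying $a^{\ast}a=a^{\ast}xa^{2}$ by the witness $s$ and applying the involution --- the same identity either way). From there the endgames diverge. The paper uses $a=t^{\ast}a^{\ast}a$ to deduce $a=ata$ and $(at)^{\ast}=at$, i.e., that $t$ is a $\{1,3\}$-inverse of $a$, and then simply cites Theorem~\ref{theo2}. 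You bypass Theorem~\ref{theo2} entirely: you read $a=s^{\ast}a^{\ast}a=(s^{\ast}a^{\ast}x)a^{2}\in\mathcal{R}a^{2}$ to get $a\in\mathcal{R}^{\sharp}$ from Lemma~\ref{lem2}(1), prove $(as)^{\ast}=as$ via the bootstrap $as=(as)^{\ast}(as)$, and identify $as=aa^{\sharp}=a^{\sharp}a$ by the right-annihilator argument (which is valid: $a^{2}y=0$ implies $ay=a^{\sharp}a^{2}y=0$, so $a^{2}(s-a^{\sharp})=0$ forces $a(s-a^{\sharp})=0$), after which Lemma~\ref{lem2}(2) applies directly. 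Your version is self-contained within this theorem and never needs the $\{1,3\}$-inverse machinery, at the cost of the extra step identifying $as$ with $a^{\sharp}a$; the paper's version is shorter on the page because it delegates the group-invertibility and Hermitian-projection checks to the earlier theorem. Every step of your argument checks out.
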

\begin{proof}
As $a\in a^{2}\mathcal{R}$, then we have $a=a^{2}t$ for some $t\in \mathcal{R}$.
By $ax=(xa)^{\ast}$, it implies that $ax=(xa)^{\ast}=(xa^{2}t)^{\ast}=(at)^{\ast}(xa)^{\ast}=t^{\ast}a^{\ast}ax$.
Multiplying on the right by $a$ gives $a=t^{\ast}a^{\ast}a$.
Then we can obtain that $a^{\ast}=a^{\ast}at$, and consequently
$a=t^{\ast}a^{\ast}a=t^{\ast}(a^{\ast}at)a=ata$.
Moreover, it follows from $a=t^{\ast}a^{\ast}a$ that $at=(t^{\ast}a^{\ast}a)t=(at)^{\ast}at$,
which gives that $(at)^{\ast}=at$.
Hence, one can see that $a\in \mathcal{R}^{\{1,3\}}$.
This implies that $a\in \mathcal{R}^{EP}$ by Theorem \ref{theo2}.
\end{proof}

Similarly, by Proposition \ref{prop2}, we can obtain the following results whenever $a\in \mathcal{R}a^{2}$.

\begin{proposition}\label{prop3}
Let $a\in \mathcal{R}a^{2}$. Then the following statements are equivalent:

(1) $a\in \mathcal{R}^{EP}$;

(2) There exists some $x\in \mathcal{R}$ such that $a=axa$ and $(ax)^{\ast}=xa$.
\end{proposition}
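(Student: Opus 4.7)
The proposition is the left/right dual of Theorem \ref{theo2}: the hypothesis $a\in\mathcal{R}a^{2}$ replaces $a\in a^{2}\mathcal{R}$, so one expects $a\in\mathcal{R}^{\{1,4\}}$ to drop out of the argument and Proposition \ref{prop2} to take over from Theorem \ref{theo2} as the closing step. My plan is therefore to transport the proof of Theorem \ref{theo2} to the opposite side. The direction $(1)\Rightarrow(2)$ is immediate by taking $x=a^{\sharp}=a^{\dag}$.

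For $(2)\Rightarrow(1)$, write $a=sa^{2}$ for some $s\in\mathcal{R}$ and start from $xa=(ax)^{*}$. The key move is the first substitution: replace the $a$ inside the factor $ax$ (not the one in $xa$) by $sa^{2}$, so that expanding the adjoint via $(sa\cdot ax)^{*}=(ax)^{*}(sa)^{*}$ yields $xa=xa\cdot a^{*}s^{*}$. Multiplying on the left by $a$ and using $axa=a$ collapses this to $a=aa^{*}s^{*}$. Taking adjoints produces $a^{*}=saa^{*}$, and feeding this back into $a=aa^{*}s^{*}$ yields $a=asa$ after a one-line rearrangement; this is one of the two equations needed to certify $s$ as a $\{1,4\}$-inverse of $a$.

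For the remaining self-adjointness of $sa$, I would substitute $a=aa^{*}s^{*}$ once more to rewrite $sa=(sa)(sa)^{*}$; taking adjoints of this equation and comparing the two forces $sa=(sa)^{*}$. Thus $s$ is a $\{1,4\}$-inverse of $a$, and Proposition \ref{prop2} delivers $a\in\mathcal{R}^{EP}$. The only delicate point is choosing which $a$ to expand at the outset: substituting into $ax$ places the extra factor $a^{*}s^{*}$ on the right, exactly where the identity $axa=a$ can absorb it. This is the mirror of the move $ax=t^{*}a^{*}ax$ in the proof of Theorem \ref{theo2}; once it is in place, the remainder is routine involution bookkeeping.
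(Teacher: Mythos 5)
Your argument is correct and is exactly the ``similar'' dual argument the paper intends but omits: from $a=sa^{2}$ and $xa=(ax)^{*}$ you derive $a=aa^{*}s^{*}$, hence $a^{*}=saa^{*}$, $asa=a$ and $(sa)^{*}=sa$, so $a\in\mathcal{R}^{\{1,4\}}$ and Proposition \ref{prop2} finishes the proof, precisely mirroring the proof of Theorem \ref{theo3}. Each step of your involution bookkeeping checks out, so nothing further is needed.
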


By Theorem \ref{theo3} and Proposition \ref{prop3}, we can obtain the following result.

\begin{corollary}\label{cor2}
Let $a\in \mathcal{R}^{\sharp}$. Then the following statements are equivalent:

(1) $a\in \mathcal{R}^{EP}$;

(2) There exists some $x\in \mathcal{R}$ such that $a=axa$ and $(ax)^{\ast}=xa$.
\end{corollary}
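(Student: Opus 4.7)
The plan is to deduce this corollary immediately from Theorem \ref{theo3} (or, symmetrically, from Proposition \ref{prop3}) by exploiting the fact that group invertibility is a strictly stronger condition than merely belonging to $a^{2}\mathcal{R}$. The key bridge is Lemma \ref{lem2}(1), which characterizes $\mathcal{R}^{\sharp}$ precisely as $a^{2}\mathcal{R}\cap\mathcal{R}a^{2}$; in particular, every group invertible element sits inside $a^{2}\mathcal{R}$ (and also inside $\mathcal{R}a^{2}$).

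First I would dispose of $(1)\Rightarrow(2)$, which is essentially automatic: if $a\in\mathcal{R}^{EP}$, set $x=a^{\sharp}=a^{\dag}$, so that $axa=a$ and $ax=xa$ is self-adjoint, giving $(ax)^{\ast}=ax=xa$. For $(2)\Rightarrow(1)$, I would invoke Lemma \ref{lem2}(1) on the standing hypothesis $a\in\mathcal{R}^{\sharp}$ to extract $a\in a^{2}\mathcal{R}$. With this containment in hand, the hypotheses of Theorem \ref{theo3} are met (the element $x$ supplied by (2) already witnesses $a=axa$ and $(ax)^{\ast}=xa$), and Theorem \ref{theo3} delivers $a\in\mathcal{R}^{EP}$.

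There is essentially no obstacle: the corollary is a clean repackaging of the two one-sided versions. If one prefers, the alternative route via $a\in\mathcal{R}a^{2}$ and Proposition \ref{prop3} yields the same conclusion, so either half of Lemma \ref{lem2}(1) suffices and no extra calculation is needed beyond citing the appropriate earlier result.
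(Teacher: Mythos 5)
Your proposal is correct and matches the paper's approach: the paper likewise derives the corollary directly from Theorem \ref{theo3} and Proposition \ref{prop3}, the point being exactly that $a\in\mathcal{R}^{\sharp}$ gives $a\in a^{2}\mathcal{R}\cap\mathcal{R}a^{2}$ via Lemma \ref{lem2}(1). Your observation that a single one-sided containment already suffices is accurate.
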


\section{ Characterizations of central EP elements}

\begin{definition}\label{def1}
An element $a \in \mathcal{R}$ is said to be central EP (Abb. CEP), if
there exists $x \in \mathcal{R}$ such that
\begin{center}
$axa = a$, $(ax)^{\ast}=xa\in C(\mathcal{R})$,
\end{center}
where $C(\mathcal{R})$ is the center of $\mathcal{R}$.
\end{definition}

\begin{remark}
In fact, if $a$ is CEP, by $xa \in C(\mathcal{R})$ and $axa=a$, then we have $a=a(xa)=xa^{2}$.
Thus, by Lemma \ref{lem1} and Theorem \ref{theo1}, we obtain that $ax=xa$, and $a$ is also EP.
\end{remark}

\begin{example}
Let $\mathcal{R} = M_{2}(\mathbb{Z})$, and set the involution of $\mathcal{R}$ as the transpose of matrices.
It is not difficult to work out that $a^{\sharp}=a^{\dag}=a^{\ast}=a$
by taking $a=\left(
          \begin{array}{cc}
            1 & 0 \\
            0 & 0 \\
          \end{array}
        \right)$.
This implies that $a$ is EP.
Moreover, it is known that the center of $M_{2}(\mathbb{Z})$ contain only $I_{2}$ and $O_{2}$.
However, we can not find the element $x\in \mathcal{R}$ satisfying the definition of CEP, such that $ax$ is $I_{2}$ or $O_{2}$.
This shows that $a$ is not CEP.
And it illustrates CEP is a proper subset of EP.
\end{example}

In paper \cite{WZ1}, the authors introduced the definition of central group inverse and studied the related properties.

\begin{definition}
An element $a \in \mathcal{R}$ is said to be central group invertible \cite{WZ1}
if there exists $x \in \mathcal{R}$ satisfying
$xa \in C(\mathcal{R})$, $xax= x$, and $a^{2}x = a$.
\end{definition}
In this case, $x$ is called a central group inverse of $a$ (denoted by $a^{\copyright}$  ).

\begin{theorem}\label{theo4}
An element $a\in \mathcal{R}$ is CEP if and only if $a$ is central group invertible and MP invertible, and $a^{\copyright}=a^{\dag}$.
\end{theorem}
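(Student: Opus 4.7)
The overall strategy is to combine the remark after Definition~\ref{def1} --- which records that a CEP element is automatically EP with $ax=xa$ and, by Theorem~\ref{theo1}, $a^{\sharp}=a^{\dag}=xax$ --- with the definition of central group invertibility. For the forward direction, suppose $a$ is CEP with witness $x$, so that $axa=a$, $(ax)^{\ast}=xa$, and $xa\in C(\mathcal{R})$. Then the remark already delivers MP invertibility of $a$ and the identity $a^{\sharp}=a^{\dag}=xax$. Setting $y=xax$ and using $ax=xa$, one has $ya=xaxa=(xa)^{2}$, which lies in $C(\mathcal{R})$ because the center is multiplicatively closed; the relations $yay=y$ and $a^{2}y=aya=a$ are routine from the EP identities $ay=ya$ and $aya=a$. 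Hence $y$ is a central group inverse of $a$, and by its uniqueness $a^{\copyright}=xax=a^{\dag}$.

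For the backward direction, assume $a^{\dag}=a^{\copyright}=x$. MP invertibility supplies $axa=a$ and $(ax)^{\ast}=ax$, while central group invertibility supplies $xa\in C(\mathcal{R})$, $xax=x$, and $a^{2}x=a$. The only missing CEP condition is $(ax)^{\ast}=xa$, which, because $(ax)^{\ast}=ax$, reduces to showing $ax=xa$. I would exploit the centrality of $xa$ twice: first to commute $xa$ past $x$, so that $xax=x^{2}a$ and hence $x^{2}a=x$; second to commute $xa$ past $ax$, after computing $(xa)(ax)=xa^{2}x=xa$ (via $a^{2}x=a$) and $(ax)(xa)=ax^{2}a=ax$ (via $x^{2}a=x$). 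Centrality then equates the two products and forces $xa=ax$, so $(ax)^{\ast}=ax=xa$ and all three CEP conditions hold.

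The step I expect to be hardest is producing $ax=xa$ in the converse. The centrality of $xa$ is most naturally applied against $a$, but that only yields the already-known identity $axa=xa^{2}$. The key move is to apply centrality instead against $x$ (to obtain the auxiliary relation $x^{2}a=x$) and against $ax$, and to package the two equalities $(xa)(ax)=xa$ and $(ax)(xa)=ax$ so that centrality collapses them. A minor secondary point in the forward direction is verifying that $y=xax$ satisfies the strengthened condition $ya\in C(\mathcal{R})$ rather than merely being a group inverse; this relies on the identification $ya=(xa)^{2}$ together with multiplicative closure of $C(\mathcal{R})$.
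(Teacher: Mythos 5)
Your proof is correct and follows essentially the same route as the paper: the forward direction sets $z=xax$ and verifies it is simultaneously the central group inverse and the Moore--Penrose inverse, and the converse takes $x=a^{\copyright}=a^{\dag}$ and checks the CEP conditions. The only difference is that you explicitly justify the step $ax=xa$ in the converse (via $x^{2}a=x$ and the identities $(xa)(ax)=xa$, $(ax)(xa)=ax$), which the paper compresses into ``we can check.''
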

\begin{proof}
As $a$ is CEP, there exists some $x\in \mathcal{R}$ such that $axa = a$, $(ax)^{\ast}=xa$ and $xa \in C(\mathcal{R})$.
Set $z=xax$. It is not difficult to check that $aza=a$, $zaz=z$ and $az=za=xa=ax$,
which gives that $a$ is central group invertible and MP invertible, and $a^{\copyright}=a^{\dag}=xax$.
Conversely, we only have to choose $y=a^{\copyright}=a^{\dag}$.
Then we can check that $aya = a$, $(ay)^{\ast}=ya$ and $ya \in C(\mathcal{R})$,
and consequently $a$ is CEP.
\end{proof}

\begin{remark}\label{rem2}

If $a$ is CEP, by the definition, there exists $x\in \mathcal{R}$ such that $axa=a$ and $(ax)^{\ast}=xa\in C(\mathcal{R})$.
In this case, the element $x$ is not unique, in generally.
For example,  set $\mathcal{R}=\mathbb{Z}$ and $a=0$. Clearly, the element $x$ can be arbitrary.
\end{remark}

\begin{proposition}\label{pro311}
Let $a\in \mathcal{R}$. Then the following statements are equivalent:

(1) $a$ is CEP.

(2) there exists the unique $z \in \mathcal{R}$ such that $aza = a$, $zaz=z$ and $(az)^{\ast}=za\in C(\mathcal{R})$.

In this case, $z$ is called a CEP-inverse of $a$ (denoted by $a^{\copyright\dag}$  ).
\end{proposition}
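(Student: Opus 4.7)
The plan is to dispatch the easy implication $(2)\Rightarrow(1)$ by noting that any $z$ from (2) immediately verifies Definition \ref{def1}, to produce existence in $(1)\Rightarrow(2)$ by invoking Theorem \ref{theo4} with $z:=a^{\dag}$, and finally to secure uniqueness by arguing that the conditions in (2) silently force $z$ to be the Moore--Penrose inverse of $a$.

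For $(2)\Rightarrow(1)$ there is essentially nothing to do: the defining relations of CEP in Definition \ref{def1} are a proper subset of the hypotheses in (2), so $x:=z$ works and the additional identity $zaz=z$ is simply unused.

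For existence in $(1)\Rightarrow(2)$, I would take $z:=a^{\dag}$. By Theorem \ref{theo4} this $a^{\dag}$ exists and in fact equals $xax$, where $(a,x)$ witnesses the CEP property of $a$. The first two required identities $aza=a$ and $zaz=z$ are then immediate from the Moore--Penrose axioms. For the third, the remark following Definition \ref{def1} together with Lemma \ref{lem1} forces $ax=xa$, and then a quick calculation gives $az=a(xax)=(axa)x=ax=xa=za\in C(\mathcal{R})$, while $(az)^{*}=(ax)^{*}=xa=za$.

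The actual content of the statement is uniqueness. Suppose $z\in\mathcal{R}$ satisfies $aza=a$, $zaz=z$ and $(az)^{*}=za\in C(\mathcal{R})$. Because $za$ is central, $a=aza=(za)a=za^{2}$, and Lemma \ref{lem1} (applied with $z$ in the role of $x$) upgrades $(az)^{*}=za$ to $az=za$. Consequently both $(az)^{*}=az$ and $(za)^{*}=za$ hold; combined with $aza=a$ and $zaz=z$ this says that $z$ satisfies all four Moore--Penrose equations, so $z=a^{\dag}$. Uniqueness of the Moore--Penrose inverse then delivers uniqueness of $z$. There is no serious obstacle in the argument; the only point worth flagging is that the asymmetric-looking relation $(az)^{*}=za$ together with centrality of $za$ secretly collapses---via Lemma \ref{lem1}---to the two Moore--Penrose symmetry conditions, which is exactly what makes uniqueness automatic.
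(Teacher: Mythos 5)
Your proof is correct, but your uniqueness argument takes a genuinely different route from the paper's. For existence the two proofs coincide in substance: the paper sets $z=xax$ for a CEP-witness $x$ and checks the three identities directly, which is the same element as your $a^{\dag}$ (via Theorem \ref{theo4}). The divergence is in uniqueness. You show that any $z$ satisfying (2) is forced to be the Moore--Penrose inverse: centrality of $za$ gives $a=aza=za^{2}$, Lemma \ref{lem1} then upgrades $(az)^{*}=za$ to $az=za$, and the two symmetry conditions $(az)^{*}=az$ and $(za)^{*}=za$ follow, so $z=a^{\dag}$ and uniqueness is inherited from uniqueness of $a^{\dag}$. The paper instead runs a direct two-candidate computation: if $z$ and $y$ both satisfy (2), then $z=zaz=z(aya)z=zaz(ay)=zay$ using centrality of $ay$, and symmetrically $y=yay=y(aza)y=(za)yay=zay$ using centrality of $za$, whence $z=y$. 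The paper's argument is shorter and uses only the centrality hypotheses, never invoking Lemma \ref{lem1} or the Moore--Penrose machinery; yours is slightly longer but yields the extra information $z=a^{\dag}$ as a byproduct (which the paper only records afterwards, in Remark \ref{rem8}). Both are valid.
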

\begin{proof}
As $a$ is CEP, there exists some $x\in \mathcal{R}$ such that $axa = a$, $(ax)^{\ast}=xa \in C(\mathcal{R})$.
Set $z=xax$. It is not difficult to check that $aza=a$, $zaz=z$ and $(ax)^{\ast}=xa \in C(\mathcal{R})$.
In order to prove the uniqueness of $z$, assume that there exists $y$ satisfying the condition (2).
Then, by $ay\in C(\mathcal{R})$, we have $z=zaz=z(aya)z=zaz(ay)=zay$.
Similarly, by $za\in C(\mathcal{R})$, one can see $y=yay=y(aza)y=(za)yay=zay$, which implies that $z=y$.
\end{proof}

\begin{remark}\label{rem8}
If $a$ is CEP, then $a^{\copyright\dag}=a^{\copyright}=a^{\sharp}=a^{\dag}$
\end{remark}

\begin{proposition}\label{prop4}
An element $a \in \mathcal{R}$ is CEP if and only if $a$ is
EP and the idempotent $1-a^{\dag}a$ is central.
\end{proposition}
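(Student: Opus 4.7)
The plan is to check both implications directly, using the already-established equality $a^{\dag}=a^{\copyright}=xax$ from Theorem \ref{theo4} for the forward direction, and the defining properties of the Moore--Penrose inverse together with the EP identity $aa^{\dag}=a^{\dag}a$ for the converse.

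For the forward direction, I would start by taking a CEP witness $x$, so $axa=a$, $(ax)^{\ast}=xa$, and $xa\in C(\mathcal{R})$. The remark preceding Proposition \ref{pro311} (together with Theorem \ref{theo4}) tells us $a\in \mathcal{R}^{EP}$, and from the proof of Theorem \ref{theo4}, the element $z=xax$ equals $a^{\dag}$ and satisfies $az=za=xa$. In particular, $aa^{\dag}=xa$. Since $xa\in C(\mathcal{R})$ by hypothesis and $aa^{\dag}=a^{\dag}a$ by EP, we deduce that $a^{\dag}a\in C(\mathcal{R})$, hence $1-a^{\dag}a\in C(\mathcal{R})$.

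For the converse, the natural candidate for the CEP witness is $x=a^{\dag}$. Using the Moore--Penrose axioms and the EP assumption $aa^{\dag}=a^{\dag}a$, I would verify the three CEP conditions: $axa=aa^{\dag}a=a$; $(ax)^{\ast}=(aa^{\dag})^{\ast}=aa^{\dag}=a^{\dag}a=xa$; and $xa=a^{\dag}a\in C(\mathcal{R})$ since by hypothesis $1-a^{\dag}a\in C(\mathcal{R})$. This gives CEP by Definition \ref{def1}.

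Both directions are essentially bookkeeping once the right identification $xa=aa^{\dag}$ is noted, so there is no serious obstacle; the only thing to keep in mind is that for the forward direction one must pass from the given (non-unique) $x$ to the distinguished inverse $z=xax=a^{\dag}$ in order to transfer centrality of $xa$ to centrality of $a^{\dag}a$.
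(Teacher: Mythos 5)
Your proposal is correct and follows essentially the same route as the paper: the forward direction transfers centrality of $xa$ to $a^{\dag}a$ via the identification $a^{\dag}=xax$ (the paper does this by citing Remark \ref{rem8} and the defining property of the CEP-inverse, which is the same computation), and the converse takes $x=a^{\dag}$ as the CEP witness exactly as the paper does. You have merely written out the bookkeeping that the paper leaves implicit.
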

\begin{proof}
By Remark \ref{rem8}, we have $1-a^{\dag}a=1-a^{\copyright\dag}a$, which gives $1-a^{\dag}a$ is central.
Conversely, we can choose $a^{\dag}$ as the CEP inverse of $a$.
\end{proof}

Next, we will give the definition of central projection.
An element $p\in \mathcal{R}$ is projection if $p=p^{2}=p^{\ast}$,
and $p$ is said to be a central projection if $p=p^{2}=p^{\ast}\in C(\mathcal{R})$.
We use the symbols $P(\mathcal{R})$ and $CP(\mathcal{R})$
to denote the set of all projections and the set of all central projections in $\mathcal{R}$, respectively.
By Proposition \ref{prop4}, we can obtain the following corollary.

\begin{corollary}
Every EP element of $\mathcal{R}$ is CEP if and only if $P(\mathcal{R})=CP(\mathcal{R})$.
\end{corollary}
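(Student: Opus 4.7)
The plan is to use Proposition \ref{prop4} as the bridge: it identifies CEP with the conjunction of EP and centrality of $1-a^{\dag}a$. Once this reformulation is in hand, the corollary reduces to a bookkeeping argument about projections, because $a^{\dag}a$ is always a projection when $a$ is EP.

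For the implication ``every EP element is CEP $\Rightarrow$ $P(\mathcal{R})=CP(\mathcal{R})$,'' I would take an arbitrary projection $p\in P(\mathcal{R})$ and first check that $p$ is itself an EP element with $p^{\sharp}=p^{\dag}=p$ (the three EP defining equations $xp^{2}=p$, $px^{2}=x$, $(xp)^{\ast}=xp$ are immediate from $p=p^{2}=p^{\ast}$). By hypothesis $p$ is then CEP, so Proposition \ref{prop4} yields that $1-p^{\dag}p=1-p$ is central, whence $p$ itself lies in $C(\mathcal{R})$. Combined with $p=p^{2}=p^{\ast}$ this gives $p\in CP(\mathcal{R})$.

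For the converse, I would start from an EP element $a\in \mathcal{R}^{EP}$ and observe that $a^{\dag}a=(a^{\dag}a)^{\ast}=(a^{\dag}a)^{2}$, so $a^{\dag}a\in P(\mathcal{R})$. The hypothesis $P(\mathcal{R})=CP(\mathcal{R})$ makes $a^{\dag}a$ central, hence so is $1-a^{\dag}a$. Applying Proposition \ref{prop4} in the other direction then shows that $a$ is CEP.

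There is essentially no substantial obstacle here; the only point worth pausing on is verifying that every projection is already EP (so that the hypothesis of the forward direction applies to it), after which both implications follow directly from Proposition \ref{prop4}.
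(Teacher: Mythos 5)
Your proposal is correct and follows exactly the route the paper intends: the paper derives this corollary directly from Proposition \ref{prop4}, and your argument simply fills in the two routine verifications (that every projection is EP with $p^{\dag}=p$, and that $a^{\dag}a$ is a projection for an EP element $a$) needed to apply that proposition in both directions. No issues.
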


Recall that an element $a$ of $\mathcal{R}$ is called clean
if it is the sum of an idempotent $e \in \mathcal{R}$ and a unit $u \in \mathcal{R}$.
Such a clean decomposition $a = e + u$ in a ring $\mathcal{R}$ is called strongly clean if $eu = ue$.
In what follows, the clean decompositions for CEP elements are considered.

\begin{proposition}\label{prop5}
Let $a\in \mathcal{R}$. Then the following statements are equivalent:

(1) $a$ is CEP.

(2) There exist $u\in U(\mathcal{R})$ and $p\in CP(\mathcal{R})$ such that $a = u + p$ and $up=-p$.

In addition, $u$ and $p$ are unique.
\end{proposition}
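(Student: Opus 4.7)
The plan is to exhibit the decomposition explicitly in the forward direction, reverse-engineer a CEP-inverse from it in the backward direction, and then appeal to the uniqueness part of Proposition~\ref{pro311} to pin down $u$ and $p$.

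For $(1)\Rightarrow(2)$: since $a$ is CEP, Proposition~\ref{prop4} tells us that $a$ is EP and that $1-a^\dag a$ is central; EP-ness gives $aa^\dag = a^\dag a = aa^\sharp$ and makes this element a self-adjoint central idempotent, so $p := 1 - aa^\dag \in CP(\mathcal{R})$ is the natural candidate. I would then set $u := a - p$. The relation $up = ap - p^2 = -p$ is immediate from $ap = 0$, so the only real content is verifying $u \in U(\mathcal{R})$. My approach is to guess $u^{-1} = a^\sharp - p$ and check it directly: using $ap = pa = a^\sharp p = pa^\sharp = 0$ (the last two come from $a^\sharp a a^\sharp = a^\sharp$ together with $a^\sharp a = aa^\sharp$) and $aa^\sharp = 1 - p$, both $(a-p)(a^\sharp-p)$ and $(a^\sharp-p)(a-p)$ collapse to $1$.

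For $(2)\Rightarrow(1)$: given $a = u+p$ with $up = -p$, centrality of $p$ forces $pu = up = -p$, so $ap = up + p^2 = 0$ and symmetrically $pa = 0$; equivalently $a(1-p) = (1-p)a = a$. The candidate CEP-inverse is $x := u^{-1}(1-p)$; left-multiplying $up = -p$ by $u^{-1}$ gives $u^{-1}p = -p$, and expanding $ax = (u+p)u^{-1}(1-p)$ and $xa = u^{-1}(1-p)(u+p)$ reduces both to $1-p$. Hence $axa = (1-p)a = a$ and $(ax)^{\ast} = 1-p = xa \in C(\mathcal{R})$, which is exactly the CEP condition.

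For the uniqueness of $u$ and $p$: the $x$ just built also satisfies $xax = (1-p)x = u^{-1}(1-p)^2 = x$, so it is a CEP-inverse of $a$ in the sense of Proposition~\ref{pro311}; the uniqueness clause of that proposition then forces $x = a^{\copyright\dag}$, so $1-p = ax$ is determined by $a$ alone, making $p$ unique and $u = a - p$ forced. The main obstacle I anticipate is the bookkeeping around $pa^\sharp = 0 = a^\sharp p$ needed for the invertibility of $u$ in the forward direction — this requires combining EP-ness (so that $a^\sharp = a^\dag$ and $aa^\sharp$ is self-adjoint) with the group-inverse identity $a^\sharp a a^\sharp = a^\sharp$ — but once this identity is secured, every other verification in the argument is a one- or two-line mechanical calculation.
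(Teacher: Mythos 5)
Your proof is correct, and the main equivalence follows the paper's route exactly: the same decomposition $p=1-aa^{\dag}$, $u=a-1+aa^{\dag}$ in the forward direction (you merely spell out the verification that $a^{\dag}-p$ inverts $u$, which the paper asserts), and the same candidate $x=u^{-1}(1-p)$ in the backward direction. Where you genuinely diverge is the uniqueness claim. The paper takes two decompositions $a=u+p=v+q$ and computes directly: $aq=0$ and $(1-p)q=u^{-1}aq=0$ give $q=pq$, symmetrically $p=qp$, and centrality yields $p=qp=pq=q$, hence $u=v$. You instead observe that the $x=u^{-1}(1-p)$ built from \emph{any} admissible decomposition satisfies $axa=a$, $xax=x$ and $(ax)^{\ast}=xa\in C(\mathcal{R})$, so by the uniqueness clause of Proposition~\ref{pro311} it must equal $a^{\copyright\dag}$; thus $p=1-ax=1-aa^{\copyright\dag}$ is determined by $a$ alone and $u=a-p$ is forced. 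Your argument is shorter and makes the structural point that every such decomposition encodes the same CEP-inverse, at the cost of depending on Proposition~\ref{pro311}; the paper's computation is self-contained and elementary. One stylistic caution: your phrasing ``EP-ness gives \dots{} and makes this element a self-adjoint central idempotent'' slightly overstates what EP-ness alone provides --- centrality of $aa^{\dag}$ comes from the CEP hypothesis via Proposition~\ref{prop4}, which you do cite, so the content is fine but the attribution should be kept straight.
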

\begin{proof} $(1)\Rightarrow(2)$.
As $a$ is CEP, by Theorem \ref{theo4}, it follows that $a$ is central group invertible and MP invertible.
Moreover, $a^{\copyright}=a^{\sharp}=a^{\dag}$.
It is not difficult to check that $u=a-1+aa^{\dag}$ is invertible and $u^{-1}=a^{\dag}-1+aa^{\dag}$.
Write $p=1-aa^{\dag}$. Then, by Proposition \ref{prop4}, one can see that $p\in CP(\mathcal{R})$, $up=-p$ and $a=u+p$.

$(2)\Rightarrow(1)$. If there exist $u\in U(\mathcal{R})$ and $p\in CP(\mathcal{R})$ such that $a = u + p$ and $up=-p$,
then, for $x=u^{-1}(1-p)$, one can see that
$$ax=xa=u^{-1}(1-p)(u+p)=u^{-1}(1-p)u=u^{-1}u(1-p)=1-p\in C(\mathcal{R}).$$
This leads to $axa=(u+p)(1-p)=u-up=u-(-p)=a$ and $(ax)^*=1-p=xa$.
Hence, $a$ is CEP.

In order to prove that $u$ and $p$ are unique,
assume that there exist $u,v\in U(\mathcal{R})$ and $p,q\in CP(\mathcal{R})$
such that $a = u + p=v+q$, $up=-p$ and $vq=-q$.
Notice that $aq=(v+q)q=vq+q=-q+q=0$ and $p=-u^{-1}p=-u^{-1}(a-u)=-u^{-1}a+1$,
which imply $(1-p)q=u^{-1}aq=0$.
So, $q=pq$ and similarly $p=qp$.
Now, $p=qp=pq=q$ and then $a=u+p=v+p$ gives $u=v$.
\end{proof}

\begin{proposition} \label{prop6}
Let $a\in \mathcal{R}$. Then the following statements are equivalent:

(1) $a$ is CEP.

(2) There exist $u\in U(\mathcal{R})$ and $q\in CP(\mathcal{R})$ such that $a=uq$.

In addition, $q$ is unique.
\end{proposition}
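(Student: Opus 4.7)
The plan is to mimic the idempotent-plus-unit decomposition of Proposition \ref{prop5}, but now seek a \emph{multiplicative} decomposition $a=uq$. The key structural input is Proposition \ref{prop4}, which identifies the ``central part'' of a CEP element as the projection $aa^{\dag}$.

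For $(1)\Rightarrow(2)$, I would set $q:=aa^{\dag}$ and $u:=a+1-aa^{\dag}$. Proposition \ref{prop4} guarantees $q\in CP(\mathcal{R})$, and the natural candidate for the inverse is $u^{-1}:=a^{\dag}+1-aa^{\dag}$. Expanding the product and using the EP identities $aa^{\dag}a=a$, $a^{\dag}aa^{\dag}=a^{\dag}$, together with $a^{\dag}a=aa^{\dag}$, the cross-terms collapse to give $uu^{-1}=u^{-1}u=1$. Then $uq=(a+1-aa^{\dag})aa^{\dag}=aa^{\dag}a+aa^{\dag}-aa^{\dag}=a$, giving the required factorisation.

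For $(2)\Rightarrow(1)$, given $a=uq$ with $u\in U(\mathcal{R})$ and $q\in CP(\mathcal{R})$, I would test $x:=u^{-1}q$. Because $q$ is central, $ax=uqu^{-1}q=q^{2}=q$ and $xa=u^{-1}q\cdot uq=q^{2}=q$, so $ax=xa=q\in C(\mathcal{R})$, $(ax)^{\ast}=q^{\ast}=q=xa$, and $axa=qa=a$ (using $qa=q^{2}u=qu=a$ by centrality of $q$), which matches Definition \ref{def1}. For the uniqueness of $q$, the same $x=u^{-1}q$ also satisfies $xax=u^{-1}q\cdot q\cdot u^{-1}q=u^{-1}q=x$, so $x$ is a CEP-inverse of $a$ in the sense of Proposition \ref{pro311}. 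The uniqueness part of Proposition \ref{pro311} then forces $x=a^{\copyright\dag}$ to depend only on $a$, hence so does $q=ax$; consequently any two decompositions $a=u_{1}q_{1}=u_{2}q_{2}$ of the required form must satisfy $q_{1}=q_{2}$.

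The main obstacle is really just bookkeeping: Proposition \ref{prop4} already does the structural heavy lifting by pinning down $aa^{\dag}$ as a central projection, so the forward implication reduces to a direct calculation, while the converse requires only the observation that $u^{-1}q$ combined with $a=uq$ symmetrically reproduces $q$. The one subtlety is to select this particular $x=u^{-1}q$ rather than some other unit-times-projection candidate, so that $x$ is immediately recognisable as the CEP-inverse of $a$; otherwise uniqueness of $q$ would not drop out for free from Proposition \ref{pro311}.
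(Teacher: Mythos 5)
Your proposal is correct and follows essentially the same route as the paper: $q$ is the central projection $aa^{\dag}=a^{\dag}a$ paired with an explicit unit (you take $u=a+1-aa^{\dag}$ where the paper reuses $u=a-1+a^{\dag}a$ from Proposition \ref{prop5}; both work), and the converse tests $x=u^{-1}q$ exactly as the paper does. The only real divergence is uniqueness of $q$: the paper verifies $p=pq=qp=q$ by a short direct computation from $a=uq=vp$, whereas you deduce it from the uniqueness of the CEP-inverse in Proposition \ref{pro311} by noting $x=u^{-1}q$ satisfies $xax=x$ — a slightly cleaner, equally valid argument.
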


\begin{proof}
$(1)\Rightarrow(2)$. Suppose that $a$ is CEP.
Let $u = a-1+a^{\dag}a$ and $q = a^{\dag}a$.
Then, by Proposition \ref{prop5},
we have $u \in U(\mathcal{R})$ and $q \in CP(\mathcal{R})$.
Moreover, it is easy to check that $uq = a$.

$(2) \Rightarrow (1)$.
Let $a = uq$ with $u \in U(\mathcal{R})$ and $q\in CP(\mathcal{R})$.
Set $x = u^{-1}q$.
It can be easily checked that $x$ is just the CEP-inverse of $a$.

To show that $q$ is unique, let there exist $u, v\in U(\mathcal{R})$ and $p, q\in CP(\mathcal{R})$ such that $a=uq=vp$.
Since $aq=uq=a$ and $p=v^{-1}a$, we get $p(1-q)=v^{-1}a(1-q)=0$ which yields $p=pq$.
In a same manner, we have that $q=qp$ and thus $p=pq=qp=q$.
\end{proof}

\begin{remark}
The element $u\in U(\mathcal{R})$ of condition (2) in Proposition \ref{prop6} is not unique, in general.
For instance, set $\mathcal{R}=\mathbb{Z}$ and $a=0$. Clearly, $q=0$, and we can chose $u=1$ or $u=-1$.
\end{remark}

Using tripotents, we characterize CEP elements in the following results.

\begin{proposition}
Let $a\in \mathcal{R}$. Then the following statements are equivalent:

(1) $a$ is CEP.

(2) There exist $u\in U(\mathcal{R})$ and $p=p^3\in \mathcal{R}$ such that $a = u + p$, $p^2\in CP(\mathcal{R})$ and $up^2=-p$.
\end{proposition}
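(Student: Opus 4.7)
The plan is to deduce the proposition from Proposition~\ref{prop5}. The forward direction is essentially trivial, since any central projection is already a tripotent whose square is itself: given (1), Proposition~\ref{prop5} supplies $u\in U(\mathcal{R})$ and $p_{0}\in CP(\mathcal{R})$ with $a=u+p_{0}$ and $up_{0}=-p_{0}$, and then $p:=p_{0}$ satisfies $p^{3}=p_{0}\cdot p_{0}^{2}=p_{0}=p$, $p^{2}=p_{0}\in CP(\mathcal{R})$ and $up^{2}=up_{0}=-p$.

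For the substantial direction $(2)\Rightarrow(1)$, my strategy is to extract from the tripotent $p$ the central projection $q:=p^{2}\in CP(\mathcal{R})$ and to rewrite $a=u+p$ in the form $a=v+q$ with a new unit $v$ satisfying $vq=-q$; Proposition~\ref{prop5} then finishes the job. The hypothesis $up^{2}=-p$ reads $uq=-p$, so, using that $q$ is central, one has $a=u+p=u-uq=u(1-q)$. As a byproduct, squaring $p=-uq$ gives $q=p^{2}=u^{2}q^{2}=u^{2}q$, confirming that all data are consistent on the corner $q\mathcal{R}q$.

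Now I define $v:=a-q=u(1-q)-q$. Since $q(1-q)=0$ and $q$ is central, a short computation gives $vq=-q$; and setting $w:=u^{-1}(1-q)-q$, the cross terms in the expansions of $vw$ and $wv$ vanish by $q(1-q)=0$ and centrality of $q$, leaving $(1-q)^{2}+q^{2}=(1-q)+q=1$ in both cases. Hence $v\in U(\mathcal{R})$ with $v^{-1}=w$. The resulting decomposition $a=v+q$ has $v\in U(\mathcal{R})$, $q\in CP(\mathcal{R})$ and $vq=-q$, so Proposition~\ref{prop5} yields that $a$ is CEP.

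The only real insight required is to use $q=p^{2}$ in place of $p$, converting a tripotent clean decomposition into a central-projection clean decomposition; after that, the invertibility of $v$ and the identity $vq=-q$ both fall out of the orthogonal central idempotent splitting $1=q+(1-q)$ with no serious computation. Consequently I expect no genuine obstacle: the delicate part is only recognizing the right reduction to Proposition~\ref{prop5}.
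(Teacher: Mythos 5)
Your proof is correct. The forward direction is identical to the paper's (take $p\in CP(\mathcal{R})$ from Proposition \ref{prop5} and observe it is its own square and cube). For $(2)\Rightarrow(1)$ you take a genuinely different, though closely related, route: the paper simply writes down $x=u^{-1}(1-p^2)$ and verifies the CEP equations $xa=ax=1-p^2\in C(\mathcal{R})$, $axa=a$, $(ax)^*=xa$ directly, whereas you convert the tripotent decomposition into a new clean decomposition $a=v+q$ with $q=p^2\in CP(\mathcal{R})$, $v=u(1-q)-q$, and invoke Proposition \ref{prop5} as a black box. Your computations are sound: $q$ is a central idempotent (either because $q\in CP(\mathcal{R})$ by hypothesis or because $q^2=p^4=p^3p=p^2=q$), so $q(1-q)=0$ gives $vq=-q$, and the orthogonal splitting $1=q+(1-q)$ gives $vw=wv=1$ for $w=u^{-1}(1-q)-q$. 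Amusingly, your argument unwinds to exactly the paper's: the CEP-inverse produced by Proposition \ref{prop5} from your decomposition is $v^{-1}(1-q)=u^{-1}(1-q)^2=u^{-1}(1-p^2)$, which is the paper's $x$. What your reduction buys is not having to re-verify the four CEP identities; what it costs is verifying invertibility of $v$ and the identity $vq=-q$, which is comparable effort. Both proofs hinge on the same insight, namely that $p^2$ is the central projection that matters and $p$ itself is recovered as $-up^2$.
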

\begin{proof} $(1)\Rightarrow(2)$.
Applying Proposition \ref{prop5},
there exist $u\in U(\mathcal{R})$ and $p\in CP(\mathcal{R})$ such that $a=u + p$ and $up=-p$.
Therefore, $p=p^3\in \mathcal{R}$, $p^2=p\in CP(\mathcal{R})$ and $up^2=-p$.

$(2)\Rightarrow(1)$. Suppose that there exist $u\in U(\mathcal{R})$ and $p=p^3\in \mathcal{R}$ such that $a = u + p$, $p^2\in CP(\mathcal{R})$ and $up^2=-p$.
Let $x=u^{-1}(1-p^2)$. Then one can see that
\begin{center}
$xa=u^{-1}(1-p^2)(u+p)=u^{-1}(1-p^2)u=u^{-1}u(1-p^2)=1-p^2\in C(\mathcal{R})$,
\end{center}
and $ax=(u+p)u^{-1}(1-p^2)=(1-p^2)(u+p)u^{-1}=(1-p^2)uu^{-1}=1-p^2=xa$.
Moreover, we obtain $axa=(u+p)(1-p^2)=u-up^2=u-(-p)=a$ and $(ax)^*=1-p^2=xa$ give that $x$ is a CEP-inverse of $a$.
\end{proof}

\begin{proposition}
Let $a\in \mathcal{R}$. Then the following statements are equivalent:

(1) $a$ is CEP.

(2) There exist $u\in U(\mathcal{R})$ and $q=q^3\in \mathcal{R}$ such that $a=uq^2$ and $q^2\in CP(\mathcal{R})$.
\end{proposition}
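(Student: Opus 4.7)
My plan is to deduce this proposition as a direct corollary of Proposition \ref{prop6}, by observing that the data ``$q=q^3$ with $q^2 \in CP(\mathcal{R})$'' is essentially just a renaming of a central projection. The key point is that if $p$ is a projection then $p=p^3$ automatically, and conversely $q^2$ serves the role of the central projection that appears in Proposition \ref{prop6}.

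For $(1)\Rightarrow(2)$, I would invoke Proposition \ref{prop6} to get $u\in U(\mathcal{R})$ and $q'\in CP(\mathcal{R})$ with $a=uq'$. Since $q'$ is a projection, $q'^2=q'$, and hence $q'^3=q'\cdot q'^2=q'\cdot q'=q'$. So setting $q:=q'$ gives a tripotent ($q^3=q$) with $q^2=q'\in CP(\mathcal{R})$ and $a=uq'=uq^2$, which is exactly what (2) requires.

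For $(2)\Rightarrow(1)$, assuming $a=uq^2$ with $u\in U(\mathcal{R})$, $q=q^3$, and $q^2\in CP(\mathcal{R})$, I would set $p:=q^2$. By assumption $p\in CP(\mathcal{R})$, and $a=up$, so Proposition \ref{prop6} applied to this decomposition yields that $a$ is CEP. (One may also double-check directly that $p=p^2$: $p^2=q^4=q\cdot q^3=q\cdot q=q^2=p$, and $p^*=(q^2)^*=q^2=p$ since $q^2\in CP(\mathcal{R})$, confirming consistency with the definition of central projection.)

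The main obstacle is essentially nil: the proposition is a cosmetic reformulation of Proposition \ref{prop6} in the language of tripotents, and both implications collapse to the fact that a central projection and its square coincide. If I wanted to do it without reference to Proposition \ref{prop6}, I could instead produce the CEP-inverse by hand, taking $x:=u^{-1}q^2$ and checking $axa=uq^2\cdot u^{-1}q^2\cdot uq^2=uq^2=a$ (using centrality of $q^2$) together with $(ax)^*=(q^2)^*=q^2=xa\in C(\mathcal{R})$; but this adds nothing beyond Proposition \ref{prop6}.
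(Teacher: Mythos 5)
Your proof is correct and follows essentially the same route as the paper: the forward direction invokes Proposition \ref{prop6} and notes that a central projection is automatically tripotent with $q^2=q$, and the converse reduces to the decomposition $a=u(q^2)$ with $q^2\in CP(\mathcal{R})$ (the paper simply verifies directly that $x=u^{-1}q^2$ is the CEP-inverse, which you also note as an alternative). No gaps.
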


\begin{proof}
$(1)\Rightarrow(2)$. By Proposition \ref{prop6},
there exist $u\in U(\mathcal{R})$ and $q\in CP(\mathcal{R})$ such that $a=uq$.
So, $q=q^3\in \mathcal{R}$, $q^2=q\in CP(\mathcal{R})$ and $a=uq^2$.

$(2) \Rightarrow (1)$. If $x = u^{-1}q^2$, we can verify that $x$ is a CEP-inverse of $a$.
\end{proof}

Next, the CEP properties of the sum and product of two CEP elements are considered.

\begin{proposition}\label{prop-product-CEP}
If $a, b\in \mathcal{R}$ are CEP, then $ab$ is CEP.
In addition, $b^{\copyright\dag}a^{\copyright\dag}$ is the CEP-inverse of $ab$.
\end{proposition}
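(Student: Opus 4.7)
Set $p := a^{\copyright\dag}$ and $q := b^{\copyright\dag}$. My plan is to verify directly that $qp$ is a CEP-inverse of $ab$ in the sense of Proposition~\ref{pro311}: that is, $(ab)(qp)(ab) = ab$, $(qp)(ab)(qp) = qp$, and $((ab)(qp))^{\ast} = (qp)(ab) \in C(\mathcal{R})$. The uniqueness clause of that proposition will then give both that $ab$ is CEP and that $(ab)^{\copyright\dag} = b^{\copyright\dag}a^{\copyright\dag}$.

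The first step is to repackage the CEP data for $a$ and $b$ as central projections absorbing them. By Lemma~\ref{lem1} applied to the defining equations of $p$ and $q$, we have $ap = pa$ and $bq = qb$; set $e := pa = ap$ and $f := qb = bq$. The hypothesis places $e, f \in C(\mathcal{R})$; self-adjointness is $e^{\ast} = (ap)^{\ast} = pa = e$ and similarly $f^{\ast} = f$; idempotence reads $e^2 = (pa)(pa) = (pap)a = pa = e$ using $pap = p$, and likewise $f^2 = f$. From the remaining CEP identities one extracts four absorption rules: $ea = ae = a$, $ep = pe = p$, $fb = bf = b$, and $fq = qf = q$.

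With $e$ and $f$ identified as central self-adjoint idempotents absorbing $a,p$ and $b,q$ respectively, the rest is mechanical. Both products collapse to the common central projection $ef$: using centrality of $f = bq$, $(ab)(qp) = a(bq)p = f(ap) = fe = ef$, and using centrality of $e = pa$, $(qp)(ab) = q(pa)b = e(qb) = ef$. The three verification steps now reduce to short manipulations: $(ab)(qp)(ab) = (ef)(ab) = e(af)b = (ea)(fb) = ab$, shuffling $f$ past $a$ by centrality and then applying the absorption rules; $(qp)(ab)(qp) = (ef)(qp) = e(fq)p = eqp = q(ep) = qp$ by analogous shuffling and absorption; and $((ab)(qp))^{\ast} = (ef)^{\ast} = f^{\ast}e^{\ast} = fe = ef = (qp)(ab) \in C(\mathcal{R})$.

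I anticipate no serious obstacle here: the entire argument rests on the observation that a CEP element carries with it a central self-adjoint idempotent serving as a two-sided identity for the element and for its CEP-inverse, and that a product of two such idempotents is again a central projection with exactly the right absorption behavior for $ab$. The only spot requiring a moment's attention is the shuffling in $(ef)(ab) = e(af)b = (ea)(fb) = ab$, where centrality of $f$ (giving $af = fa$) must be invoked before the absorption rules $ea = a$ and $fb = b$ can be applied.
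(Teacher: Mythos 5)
Your proof is correct and follows essentially the same route as the paper's: both verify the three defining identities of the CEP-inverse for $b^{\copyright\dag}a^{\copyright\dag}$ by direct computation, using centrality of $aa^{\copyright\dag}$, $a^{\copyright\dag}a$, $bb^{\copyright\dag}$, $b^{\copyright\dag}b$ to commute factors past $a$ and $b$ and then absorbing them via $axa=a$ and $xax=x$. Your packaging of this data into the central projections $e=a^{\copyright\dag}a$ and $f=b^{\copyright\dag}b$ (and the preliminary observation $ap=pa$ via Lemma~\ref{lem1}) is a tidy reorganization of the same computation rather than a genuinely different argument.
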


\begin{proof}
Assume that $x\in \mathcal{R}$ is a CEP-inverse of $a$ and $y\in \mathcal{R}$ is a CEP-inverse of $b$.
Since $xa, ax, yb$ and $by\in C(\mathcal{R})$,
one can see that
\begin{center}
$yxab=ybxa\in C(\mathcal{R})$, \\
$abyxab=axabyb=ab$, $yxabyx=ybyxax=yx$\\
and $(abyx)^*=(axby)^*=(by)^*(ax)^*=ybxa=yxab$.
\end{center}
This gives that $yx$ is a CEP-inverse of $ab$.
\end{proof}

By Proposition \ref{prop-product-CEP}, we obtain the next consequence.

\begin{corollary}
If $a\in \mathcal{R}$ is CEP and $n\in \mathbb{N}^{+}$,
then $a^n$ is CEP.
In addition, $(a^{\copyright\dag})^n$ is the CEP-inverse of $a^n$.
\end{corollary}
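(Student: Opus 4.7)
The plan is to prove this by induction on $n$, with Proposition \ref{prop-product-CEP} doing essentially all of the work.

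For the base case $n=1$, the statement is trivial: $a$ is CEP by hypothesis, and $a^{\copyright\dag}$ is its CEP-inverse by definition. For the inductive step, I would suppose that $a^n$ is CEP with CEP-inverse $(a^{\copyright\dag})^n$. Since $a$ is CEP as well, I can apply Proposition \ref{prop-product-CEP} to the pair $(a^n, a)$, concluding that $a^{n+1} = a^n\cdot a$ is CEP, with CEP-inverse
\[
a^{\copyright\dag}\cdot (a^n)^{\copyright\dag} = a^{\copyright\dag}\cdot (a^{\copyright\dag})^n = (a^{\copyright\dag})^{n+1},
\]
where the middle equality uses the inductive hypothesis. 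This closes the induction.

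There is no real obstacle here; the only tiny subtlety is bookkeeping of the order in which Proposition \ref{prop-product-CEP} stacks the CEP-inverses (the proposition gives the CEP-inverse of $ab$ as $b^{\copyright\dag}a^{\copyright\dag}$, so for $a^{n+1}=a^n\cdot a$ one has to write $a^{\copyright\dag}(a^n)^{\copyright\dag}$ rather than the other way round). Since all relevant quantities $a^{\copyright\dag}$, $(a^{\copyright\dag})^n$, $a$ lie in a commuting family (the central group/Moore–Penrose framework forces $a$ and $a^{\copyright\dag}$ to commute), this ordering does not actually affect the value, but one should still state it correctly. No additional axiom or lemma beyond Proposition \ref{prop-product-CEP} is needed.
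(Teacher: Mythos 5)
Your proof is correct and matches the paper's intent exactly: the paper derives this corollary directly from Proposition \ref{prop-product-CEP} (i.e., by iterating it, which is your induction). The ordering remark and the use of uniqueness of the CEP-inverse (Proposition \ref{pro311}) to identify $(a^n)^{\copyright\dag}$ with $(a^{\copyright\dag})^n$ are handled appropriately.
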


\begin{proposition}
Let $a, b\in \mathcal{R}$ be CEP,
$x\in \mathcal{R}$ be the CEP-inverse of $a$ and $y\in \mathcal{R}$ be the CEP-inverse of $b$.
If $xb=bx=0=ya=ay$, then $a+b$ is CEP.
In addition,
$x+y$ is the CEP-inverse of $a+b$.
\end{proposition}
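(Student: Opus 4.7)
The plan is to verify directly that $z := x+y$ satisfies the four conditions characterising the CEP-inverse of $a+b$ in Proposition~\ref{pro311}: (i) $(a+b)z(a+b)=a+b$, (ii) $z(a+b)z=z$, (iii) $((a+b)z)^{*}=z(a+b)$, and (iv) $z(a+b)\in C(\mathcal{R})$. Once these four identities are established, both CEP-ness of $a+b$ and the claim that its CEP-inverse equals $x+y$ follow at once from Proposition~\ref{pro311}.

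First I would record what the CEP hypotheses supply. Because $a$ and $b$ are CEP with CEP-inverses $x$ and $y$, the remark after Definition~\ref{def1} (via Lemma~\ref{lem1} and Theorem~\ref{theo1}) yields $ax=xa\in C(\mathcal{R})$ and $by=yb\in C(\mathcal{R})$, while the defining relations $axa=a$, $xax=x$, $(ax)^{*}=xa$ and $byb=b$, $yby=y$, $(by)^{*}=yb$ continue to hold. Combining these with the orthogonality assumptions $xb=bx=0$ and $ya=ay=0$, every mixed term collapses at the very first factor: $axb=a(xb)=0$, $bya=b(ya)=0$, $xay=x(ay)=0$, and $ybx=y(bx)=0$. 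This also gives the clean simplification $(a+b)(x+y)=ax+by=(x+y)(a+b)$.

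With these cancellations in hand the four verifications become a short calculation: (i) follows from $(ax+by)(a+b)=axa+byb=a+b$; (ii) from $(x+y)(ax+by)=xax+yby=x+y$; (iii) from $(ax+by)^{*}=(ax)^{*}+(by)^{*}=xa+yb$; and (iv) holds because $xa+yb$ is a sum of two central elements and hence central. I do not expect any genuine obstacle here; the only point requiring care is the cross-term bookkeeping, namely confirming that each of $axb$, $bya$, $xay$, $ybx$ is killed by exactly one of the four orthogonality relations $xb=bx=ya=ay=0$. Once that is observed, the remainder of the argument is purely mechanical.
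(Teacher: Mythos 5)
Your proposal is correct and is exactly the verification the paper intends: its proof consists of the single line ``we can check that $x+y$ is the CEP-inverse directly,'' and your write-up simply supplies the cross-term bookkeeping ($axb=bya=xay=ybx=0$) that makes that check go through. No difference in approach, and no gaps.
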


\begin{proof}
According to the definition of the CEP-inverse, we can check that $x+y$ is the CEP-inverse of $a$ directly.
\end{proof}

For an idempotent $p\in \mathcal{R}$, we can represent arbitrary element $a\in \mathcal{R}$ as
$$a=\sbmatrix{cc}
a_{11}&a_{12}\\a_{21}&a_{22}\endsbmatrix_p,$$
where $a_{11}=pap$, $a_{12}=pa(1-p)$, $a_{21} = (1-p)ap$, $a_{22}= (1-p)a(1-p)$.

If $p=p^{2}=p^*$, then
$$a^*=\sbmatrix{ccc}a_{11}^*&a_{21}^*\\a_{12}^*&a_{22}^*\endsbmatrix_{p}.$$

Now, we give some characterizations of CEP elements in the following theorem.

\begin{theorem}\label{th-CEP-char}
Let $a\in \mathcal{R}$. Then the following statements are equivalent:

(1) $a$ is CEP.

(2) There exist $p\in CP(\mathcal{R})$ such that
$$a=\sbmatrix{cc}
a_{1}&0\\0&0\endsbmatrix_p,$$
where $a_1\in(p\mathcal{R}p)^{-1}$.

(3) There exist $p\in CP(\mathcal{R})$ such that $a\in(p\mathcal{R}p)^{-1}$.

In addition, $x=a^{-1}_{p\mathcal{R}p}$ is the CEP-inverse of $a$.
\end{theorem}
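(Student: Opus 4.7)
The plan is to establish the cycle $(1)\Rightarrow(2)\Rightarrow(3)\Rightarrow(1)$ and to read off the formula for the CEP-inverse from the last implication. The guiding principle is that when $p$ is a central projection it commutes with everything, so the Peirce decomposition of an element that actually lies in $p\mathcal{R}p$ collapses to its $(1,1)$-block, and invertibility in the corner ring $p\mathcal{R}p$ becomes an exact match for the CEP data.

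For $(1)\Rightarrow(2)$ I would begin from Proposition~\ref{prop4}, which tells us that $a$ is EP and that $1-a^{\dag}a$ is central; hence $p:=a^{\dag}a$ is also central, and since $a\in\mathcal{R}^{EP}$ it is a self-adjoint idempotent, so $p\in CP(\mathcal{R})$. Using $aa^{\dag}=a^{\dag}a=p$ I get $ap=pa=a$, which forces $a_{12}=a_{21}=a_{22}=0$ and $a_{11}=a$ in the Peirce decomposition with respect to $p$, giving the matrix form in (2) with $a_{1}=a$. It remains to check that $a_{1}$ is a unit in the corner ring $p\mathcal{R}p$; the natural candidate is $a^{\dag}$, which lies in $p\mathcal{R}p$ (since $pa^{\dag}=a^{\dag}p=a^{\dag}$) and satisfies $aa^{\dag}=a^{\dag}a=p$, the identity of the corner.

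The step $(2)\Rightarrow(3)$ is then just a direct reading of the matrix: $a=pap\in p\mathcal{R}p$ and $a$ coincides with the invertible block $a_{1}$. For $(3)\Rightarrow(1)$, let $x=a^{-1}_{p\mathcal{R}p}$; since $a\in p\mathcal{R}p$ we have $pa=ap=a$, and the definition of the corner-inverse gives $xa=ax=p$. Then $axa=pa=a$, $xa=p\in C(\mathcal{R})$, and $(ax)^{*}=p^{*}=p=xa$, so $a$ is CEP with CEP-inverse $x$, which simultaneously yields the closing ``in addition'' clause.

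The only step with any real content is the opening one: one has to notice that centrality of $1-a^{\dag}a$ transfers to $p=a^{\dag}a$ itself and thereby confines $a$ to the corner $p\mathcal{R}p$. Once this collapse is identified, the matrix form, the equivalence with invertibility in the corner, and the identification of the CEP-inverse with the corner-inverse are all routine verifications that use nothing beyond $p\in C(\mathcal{R})$ and $p^{*}=p$.
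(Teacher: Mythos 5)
Your proof is correct and follows essentially the same route as the paper: both identify the central projection $p=ax=xa$ (which equals $a^{\dag}a$ by Remark \ref{rem8}), observe that $a$ then collapses to the corner $p\mathcal{R}p$ where it is invertible, and verify that the corner inverse satisfies the defining CEP equations. The only cosmetic differences are that you arrange the implications as a cycle $(1)\Rightarrow(2)\Rightarrow(3)\Rightarrow(1)$ rather than the paper's $(1)\Rightarrow(2)\wedge(3)$ with separate returns, and that you produce $p$ via Proposition \ref{prop4} instead of directly from the CEP-inverse of Proposition \ref{pro311}.
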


\begin{proof}
$(1)\Rightarrow(2)\wedge(3)$. If $x\in \mathcal{R}$ is the CEP-inverse of $a$ and $p=ax=xa$,
then $p\in CP(\mathcal{R})$.
Moreover, $a$ has representation as in (2) and $a_1=pap=ap=axa=a$.
We deduce that $a_1=a\in(p\mathcal{R}p)^{-1}$ and $x$ is the inverse of $a=a_1$ in $p\mathcal{R}p$.

$(2) \Rightarrow (1)$.
For $x=\sbmatrix{cc}
a_{1}^{-1}&0\\0&0\endsbmatrix_p$,
we easily check that $x$ is the CEP-inverse of $a$.

$(3) \Rightarrow (1)$. Because $a\in(p\mathcal{R}p)^{-1}$,
then $a\in p\mathcal{R}p$ gives $a=pa=ap$.
Let $x=a^{-1}_{p\mathcal{R}p}$, where $a^{-1}_{p\mathcal{R}p}$ is the inverse of $a$ in $p\mathcal{R}p$.
By $xp=a^{-1}_{p\mathcal{R}p}=px$, we obtain $xa=xpa=p=ax$ and $axa=pa=a$.
From $p\in CP(\mathcal{R})$, we have that $x$ is the CEP-inverse of $a$.
\end{proof}

Using CEP elements, we define and study the CEP relation in a ring with
involution.

\begin{definition}
Let $a\in \mathcal{R}$ be CEP and $b\in \mathcal{R}$.
Then $a$ is below $b$ under the CEP relation $($denoted by $a\leq^{CEP}b)$, if
$a=a^{\copyright\dag}ab$.
\end{definition}

\begin{proposition}\label{prop321}
Let $a\in \mathcal{R}$ be CEP and $b\in \mathcal{R}$. Then the following statements are equivalent:

(1) $a\leq^{CEP}b$.

(2) $a^{\copyright\dag}a=a^{\copyright\dag}b$.

(3) $aa^{\copyright\dag}=ba^{\copyright\dag}$.

(4) There exist $p\in CP(\mathcal{R})$ such that
$$a=\sbmatrix{cc}
a_{1}&0\\0&0\endsbmatrix_p\quad and \quad b=\sbmatrix{cc}
a_{1}&0\\0&b_{2}\endsbmatrix_p,$$
where $a_1\in(p\mathcal{R}p)^{-1}$.
\end{proposition}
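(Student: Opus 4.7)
The plan is to go around the cycle $(1)\Leftrightarrow(2)\Leftrightarrow(3)$ first, using only the algebraic identities characterizing $a^{\copyright\dag}$, and then to obtain $(1)\Leftrightarrow(4)$ via the matrix representation coming from Theorem \ref{th-CEP-char}. The crucial fact to keep in mind throughout is that for a CEP element $a$, by Remark \ref{rem8} and Definition \ref{def1} (applied to $x=a^{\copyright\dag}$) we have $a^{\copyright\dag}a=aa^{\copyright\dag}\in CP(\mathcal{R})$; in particular this element is central and idempotent, and it acts as the identity on both $a$ and $a^{\copyright\dag}$.

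For $(1)\Rightarrow(2)$, starting from $a=a^{\copyright\dag}ab$, I would left multiply by $a^{\copyright\dag}$ and use $a^{\copyright\dag}(a^{\copyright\dag}a)=(a^{\copyright\dag}a)a^{\copyright\dag}=a^{\copyright\dag}aa^{\copyright\dag}=a^{\copyright\dag}$ (via centrality of $a^{\copyright\dag}a$) to collapse the left-hand side and get $a^{\copyright\dag}a=a^{\copyright\dag}b$. For $(2)\Rightarrow(1)$, left multiply by $a$ and use $aa^{\copyright\dag}a=a$ plus centrality to turn $aa^{\copyright\dag}b$ into $a^{\copyright\dag}ab$. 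The equivalence $(1)\Leftrightarrow(3)$ is entirely parallel: writing $a^{\copyright\dag}ab=b(a^{\copyright\dag}a)=ba^{\copyright\dag}a$ (centrality again) rewrites (1) as $a=ba^{\copyright\dag}a$, and then right multiplication by $a^{\copyright\dag}$ produces (3), while right multiplication of (3) by $a$ reverses the step.

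For $(1)\Rightarrow(4)$, I would set $p:=a^{\copyright\dag}a=aa^{\copyright\dag}$, which lies in $CP(\mathcal{R})$. Then $pa=ap=a$ shows that $a$ has the block form with $a_1=pap=a$, and $a_1$ is invertible in $p\mathcal{R}p$ with inverse $a^{\copyright\dag}$ (this is exactly what Theorem \ref{th-CEP-char} records). For $b$, condition (2) gives $pb=aa^{\copyright\dag}b=a(a^{\copyright\dag}b)=a(a^{\copyright\dag}a)=a$ and condition (3) gives $bp=ba^{\copyright\dag}a=aa^{\copyright\dag}a=a$, so $pbp=a=a_1$, $pb(1-p)=pb-pbp=0$ and $(1-p)bp=bp-pbp=0$, leaving $(1-p)b(1-p)$ free to be called $b_2$.

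For $(4)\Rightarrow(1)$, the representation of $a$ together with $a_1\in(p\mathcal{R}p)^{-1}$ places $a$ in the situation of Theorem \ref{th-CEP-char}(2), so $a$ is CEP with $a^{\copyright\dag}$ equal to the inverse $a_1^{-1}$ of $a$ in $p\mathcal{R}p$; in particular $a^{\copyright\dag}a=p$. Multiplying the block form of $b$ on the left by $p$ yields $pb=a_1=a$, whence $a^{\copyright\dag}ab=pb=a$, establishing (1). The main technical nuisance I anticipate is not any single step but rather keeping the centrality of $a^{\copyright\dag}a$ visible at each reshuffling and making the identification $a_1^{-1}=a^{\copyright\dag}$ inside $p\mathcal{R}p$ cleanly, so I would state both of these facts explicitly at the start of the proof and then apply them mechanically.
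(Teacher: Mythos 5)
Your proposal is correct and follows essentially the same route as the paper: the equivalences among (1), (2), (3) are obtained by the same multiplications combined with the centrality and idempotency of $a^{\copyright\dag}a=aa^{\copyright\dag}$ (the paper arranges them as a cycle $(1)\Rightarrow(2)\Rightarrow(3)\Rightarrow(1)$ rather than two separate equivalences, but the computations are identical), and $(1)\Leftrightarrow(4)$ is derived from Theorem \ref{th-CEP-char} in both. The only difference is that the paper dismisses $(1)\Leftrightarrow(4)$ with a one-line reference, whereas you supply the block computations $pb=bp=a$ explicitly; your details are correct and arguably an improvement in completeness.
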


\begin{proof}
$(1) \Rightarrow (2)$. As $a=a^{\copyright\dag}ab$, it follows that $a^{\copyright\dag}a=a^{\copyright\dag}a^{\copyright\dag}ab$.
Since $a^{\copyright\dag}a\in C(\mathcal{R})$ and $a^{\copyright\dag}aa^{\copyright\dag}=a^{\copyright\dag}$,
we have $a^{\copyright\dag}a=a^{\copyright\dag}(a^{\copyright\dag}a)b=(a^{\copyright\dag}a)a^{\copyright\dag}b=a^{\copyright\dag}b$.

$(2) \Rightarrow (3)$. Since $a^{\copyright\dag}a=a^{\copyright\dag}b$, multiplying on the left by $a$,
we obtain $aa^{\copyright\dag}a=aa^{\copyright\dag}b$, which gives $a=aa^{\copyright\dag}b$.
By $aa^{\copyright\dag}\in C(\mathcal{R})$, it follows that $a=baa^{\copyright\dag}$.
Multiplying on the right by $a^{\copyright\dag}$,
it implies that $aa^{\copyright\dag}=b(aa^{\copyright\dag})a^{\copyright\dag}=ba^{\copyright\dag}$.

$(3) \Rightarrow (1)$. As $aa^{\copyright\dag}=ba^{\copyright\dag}$, multiplying on the right by $a$,
we obtain $a=ba^{\copyright\dag}a$.
Hence, it follows from $a^{\copyright\dag}a \in \mathcal{R}$ that $a=a^{\copyright\dag}ab$.

$(1) \Leftrightarrow (4)$. It follows by Theorem \ref{th-CEP-char}.
\end{proof}

We now prove that the CEP relation is a partial order on the set of all CEP elements of $\mathcal{R}$.

\begin{theorem} The relation $\leq^{CEP}$ is a partial order
on the set $\{r\in \mathcal{R}: r\ is\ CEP \}$.
\end{theorem}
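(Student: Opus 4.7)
The plan is to verify reflexivity, antisymmetry, and transitivity for $\leq^{CEP}$ on the set of CEP elements of $\mathcal{R}$. Throughout I will use the compact reformulation implicit in Proposition~\ref{prop321}: if $a$ is CEP then $p_a := a^{\copyright\dag}a = aa^{\copyright\dag}$ is a central projection with $p_a a = a = a p_a$, and the relation $a \leq^{CEP} b$ becomes the single identity $a = p_a b$.

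Reflexivity is immediate from the centrality of $p_a$ together with the CEP identity $a a^{\copyright\dag} a = a$: one computes $a^{\copyright\dag} a \cdot a = a \cdot a^{\copyright\dag} a = a a^{\copyright\dag} a = a$, which is precisely $a \leq^{CEP} a$.

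For antisymmetry, assume $a \leq^{CEP} b$ and $b \leq^{CEP} a$, and write $p = p_a$, $q = p_b$. The definition gives $a = p b$ and $b = q a$, while the CEP identities give $p a = a$ and $q b = b$. Since $p$ and $q$ are both central they commute, so a single substitution chain yields
$$a = pb = p(qa) = (pq)a = (qp)a = q(pa) = qa = b,$$
as required.

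Transitivity is where the real work sits. Assume $a \leq^{CEP} b$ and $b \leq^{CEP} c$, and keep $p = p_a$, $q = p_b$. Substituting gives $a = pb = p(qc) = (pq)c$, while what is needed is $a = pc$. The crucial point is therefore to establish the projection identity $pq = p$ (equivalently, $p_a \leq p_b$), which is not part of the definition of $\leq^{CEP}$ and must be extracted. For this I would invoke the alternative form of $a \leq^{CEP} b$ from Proposition~\ref{prop321}(3), namely $a a^{\copyright\dag} = b a^{\copyright\dag}$, i.e.\ $p = b a^{\copyright\dag}$; combining with $q b = b$ and the centrality of $q$ gives $p = (qb) a^{\copyright\dag} = q(b a^{\copyright\dag}) = qp = pq$. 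With this in hand, $a = (pq)c = pc$, which is exactly $a \leq^{CEP} c$. The main obstacle is precisely this extraction of the projection domination $p_a \leq p_b$ from the relation $a \leq^{CEP} b$; once it is available, everything else is routine manipulation with central projections.
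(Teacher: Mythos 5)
Your proof is correct and follows essentially the same route as the paper: direct verification of the three axioms using the centrality of $a^{\copyright\dag}a$ and the equivalent forms from Proposition~\ref{prop321}. The only cosmetic difference is in transitivity, where you isolate the projection identity $p_ap_b=p_a$ before substituting, while the paper reaches the same conclusion ($a^{\copyright\dag}c=a^{\copyright\dag}a$) through a single longer equational chain.
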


\begin{proof} Obviously, the relation $\leq^{CEP}$ is
reflexive.

To prove that $\leq^{CEP}$ is
antisymmetric, assume that $a\leq^{CEP}b$ and $b\leq^{CEP}a$.
Then we have $a=a^{\copyright\dag}ab$ and $b=b^{\copyright\dag}ba$.
Note that $a^{\copyright\dag}a\in C(\mathcal{R})$, it gives that
\begin{center}
$a=a^{\copyright\dag}ab=a^{\copyright\dag}a(b^{\copyright\dag}ba)=b^{\copyright\dag}baa^{\copyright\dag}a=b^{\copyright\dag}ba=b$.
\end{center}
In order to show that $\leq^{CEP}$ is transitive, assume that $a\leq^{CEP}b$ and $b\leq^{CEP}c$.
From  Proposition \ref{prop321}, it follows that $a^{\copyright\dag}a=a^{\copyright\dag}b$ and $b^{\copyright\dag}b=b^{\copyright\dag}c$.
This implies that $a^{\copyright\dag}c=a^{\copyright\dag}a^{\copyright\dag}ac
=a^{\copyright\dag}a^{\copyright\dag}bc=a^{\copyright\dag}a^{\copyright\dag}bbb^{\copyright\dag}c=a^{\copyright\dag}a^{\copyright\dag}bbb^{\copyright\dag}b
=a^{\copyright\dag}a^{\copyright\dag}bb=a^{\copyright\dag}a^{\copyright\dag}ab=a^{\copyright\dag}aa^{\copyright\dag}a=a^{\copyright\dag}a$.
Therefore, by Proposition \ref{prop321}, it gives that $a\leq^{CEP}c$.
\end{proof}

\section{ Characterizations *-DMP elements}
In this section, we focus on answering Question 3. That is, what is the system (1.8) going to be equivalent?
Firstly, the *-DMP element was considered. The notion of *-DMP element was introduced by Patr\'{i}cio and Puystjens~\cite{PP2004} in $\ast$-rings.
An element $a\in \mathcal{R}$ is said to be a *-DMP element if there exists $n\in \mathbb{N}^{+}$ such that $(a^n)^{\dag}$ and  $(a^n)^{\sharp}$ exist and are equal.

\begin{lemma}\cite{GC2018,PP2004}\label{4} Let $a\in \mathcal{R}$. Then the following are equivalent:

(1) $a$ is a *-DMP element.

(2) $aa^D$ is Hermitian.
\end{lemma}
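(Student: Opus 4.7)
The plan is to combine the EP-characterization from Lemma \ref{lem2}(2)---a group-invertible element $b$ is EP iff $(b^{\sharp} b)^{\ast} = b^{\sharp} b$---with the Drazin identity $(a^n)^{\sharp}=(a^D)^n$, valid for all $n \geq \operatorname{ind}(a)$. The bridge between the two conditions is the observation that $(a^n)^{\sharp}\, a^n = a a^D$, so demanding that $a^n$ be EP is literally the same as demanding that $aa^D$ be Hermitian.

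First I would record the computational fact that, for $n \geq \operatorname{ind}(a)$, the element $(a^D)^n$ is the group inverse of $a^n$, and moreover $(a^D)^n a^n = aa^D$. The verification is a routine induction that uses only the Drazin axioms: the commutation $aa^D = a^D a$, the idempotency of $aa^D$, and the absorption $a^n \cdot aa^D = a^n$ for $n \geq \operatorname{ind}(a)$. Once $a^n(a^D)^n = aa^D$ and $a^n (a^D)^n a^n = a^n$ are in hand, uniqueness of the group inverse forces $(a^n)^{\sharp}=(a^D)^n$, and hence $(a^n)^{\sharp} a^n = aa^D$.

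For $(1)\Rightarrow(2)$: if $a$ is $\ast$-DMP with witness $n$, then $a^n \in \mathcal{R}^{\sharp}$ forces $\operatorname{ind}(a) \leq n$, and the preparation gives $(a^n)^{\sharp} a^n = aa^D$. Since $a^n \in \mathcal{R}^{EP}$, Lemma \ref{lem2}(2) yields $(aa^D)^{\ast}=aa^D$. Conversely for $(2)\Rightarrow(1)$, take $n = \operatorname{ind}(a)$ (tacitly assuming $a \in \mathcal{R}^D$, which is implicit in (2) since $a^D$ must exist for the condition to be meaningful); the preparatory identity produces $a^n \in \mathcal{R}^{\sharp}$ with $(a^n)^{\sharp} a^n = aa^D$, and the hypothesis that this is Hermitian promotes $a^n$ to an EP element via Lemma \ref{lem2}(2), so $a$ is $\ast$-DMP.

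The only real obstacle is the preparatory identity $(a^n)^{\sharp}=(a^D)^n$ with $(a^n)^{\sharp} a^n = aa^D$; this is not a direct substitution in the Drazin axioms and genuinely requires iterating the commutation $aa^D=a^Da$ to push all the $a^D$'s past the $a$'s. Once that technical lemma is settled, both implications collapse to one-line applications of Lemma \ref{lem2}(2).
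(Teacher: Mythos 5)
Your proposal is correct. Note first that the paper does not prove Lemma \ref{4} at all --- it is quoted from \cite{GC2018,PP2004} --- so there is no internal proof to compare against; what you have written is essentially the standard argument from those sources, reducing the statement to the Koliha--Patr\'{i}cio EP criterion (Lemma \ref{lem2}(2)) via the identities $(a^n)^{\sharp}=(a^D)^n$ and $(a^n)^{\sharp}a^n=aa^D$ for $n\geq\operatorname{ind}(a)$. The computations behind the preparatory identity all check out: $(a^D)^n a^n=(a^Da)^n=a^Da$ because $a^Da$ is idempotent, and the three group-inverse axioms for $(a^D)^n$ against $a^n$ follow from commutativity and $a^{n+1}a^D=a^n$. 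One small point deserves to be made explicit: in $(1)\Rightarrow(2)$ you pass from ``$a^n\in\mathcal{R}^{\sharp}$'' to ``$a\in\mathcal{R}^D$ with $\operatorname{ind}(a)\leq n$,'' and this direction is not covered by the preparatory induction (which presupposes $a^D$ exists); it is Drazin's classical theorem that $a^m\in\mathcal{R}^{\sharp}$ for some $m$ implies $a\in\mathcal{R}^D$ with $a^D=a^{m-1}(a^m)^{\sharp}$, and it should be invoked (or verified: $x=a^{m-1}(a^m)^{\sharp}$ commutes with $a$ since the group inverse lies in the double commutant, and satisfies $xax=x$ and $a^{m+1}x=a^m$) rather than folded into the ``routine induction.'' With that citation added, both implications are indeed one-line applications of Lemma \ref{lem2}(2), exactly as you say.
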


\begin{theorem}\label{theo5}
Let $a\in \mathcal{R}$. Then the following are equivalent:

(1) $a$ is a *-DMP element.

(2) There exists $n\in \mathbb{N}^{+}$ and the unique $x \in \mathcal{R}$  such that
\begin{center}
$axa^n = a^n$, $ax^2=x$ and $(ax)^{\ast}=xa$.
\end{center}
\end{theorem}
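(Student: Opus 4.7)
The strategy is to use Lemma~\ref{4}, which identifies the *-DMP property with Hermiticity of $aa^D$. The natural candidate for the element $x$ appearing in (2) is the Drazin inverse $a^D$ itself, with $n$ taken to be any integer at least $\text{ind}(a)$.

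For $(1)\Rightarrow(2)$, I would set $x=a^D$ and verify the three equations by direct substitution. The equation $axa^n=a^n$ is just the Drazin identity $a^{n+1}a^D=a^n$ combined with $aa^D=a^Da$. The equation $ax^2=x$ reduces to $a(a^D)^2=a^D$, which is the Drazin axiom $a^Daa^D=a^D$ after pushing $a$ past $a^D$. The equation $(ax)^\ast=xa$ follows from Lemma~\ref{4}, which gives $(aa^D)^\ast=aa^D$, together with $aa^D=a^Da$.

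For $(2)\Rightarrow(1)$, the opening move is to apply Remark~\ref{rem1}(1) to the hypotheses $x=ax^2$ and $(ax)^\ast=xa$, extracting the commutativity $ax=xa$. With this in hand, each Drazin axiom falls out routinely: $xax=x$ follows from $ax^2=x$ via commutativity; $ax=xa$ is already established; and starting from $axa^n=a^n$ and using $ax=xa$, one obtains $a^{n+1}x=a^n$. This forces $x=a^D$ and $\text{ind}(a)\leq n$. Finally, $(ax)^\ast=xa=ax$ shows that $aa^D$ is Hermitian, so Lemma~\ref{4} concludes that $a$ is *-DMP.

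Uniqueness of $x$ is then automatic: the argument above shows that any element satisfying the three conditions must coincide with $a^D$, and $a^D$ is intrinsically determined by $a$. I do not foresee a serious technical obstacle here; the only subtlety is recognizing that Remark~\ref{rem1}(1) is precisely the right tool to start the reverse direction by supplying the commutativity $ax=xa$, without which none of the Drazin-axiom manipulations go through.
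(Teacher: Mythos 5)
Your proposal is correct and follows essentially the same route as the paper: both directions hinge on Remark~\ref{rem1}(1) (which the paper mis-cites as Lemma~\ref{lem1}) to extract $ax=xa$, after which $x$ is identified with $a^D$ and Lemma~\ref{4} converts Hermiticity of $aa^D$ into the *-DMP property. Your write-up merely fills in the Drazin-axiom verifications that the paper leaves implicit.
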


\begin{proof}  $(1)\Rightarrow (2)$
It is clear by letting $x=a^D$ and Lemma \ref{4}.
Moreover, the element $x$ satisfying the condition must be $a^{D}$.
So, the uniqueness can be proved.

$(2)\Rightarrow (1)$
As $ax^2=x$ and $(ax)^{\ast}=xa$, it follows from Lemma \ref{lem1} that $ax=xa$.
Thus, it implies that $x=a^D$ and $aa^D$ is Hermitian.
On account of Lemma \ref{4}, we conclude that $a$ is a *-DMP element.
\end{proof}

By Theorem \ref{theo5}, the answer of the question 3 is given (see table below).
Next, we investigate the clean decompositions for *-DMP elements.

\begin{proposition}\label{prop4.3}
Let $a\in \mathcal{R}$ and ${\rm ind}(a)=n$. Then the following statements are equivalent:

(1) $a$ is a *-DMP element.

(2) There exist $u\in U(\mathcal{R})$ and $p\in P(\mathcal{R})$ such that $a^n=u+p$, $pa=ap$ and $up=pu=-p$.

In addition, $u$ and $p$ are unique.
\end{proposition}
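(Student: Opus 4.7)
The approach is to mimic the standard ``invertible $+$ idempotent'' decomposition of $a^n$ coming from the Drazin inverse, where the $*$-DMP hypothesis upgrades the idempotent to a projection.

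For $(1)\Rightarrow(2)$, I would set $p:=1-aa^D$ and $u:=a^n-p=a^n-1+aa^D$. Because $a$ is $*$-DMP, Lemma~\ref{4} gives $(aa^D)^*=aa^D$, so $p=p^2=p^*$ is a projection. Since $a$ commutes with $aa^D$, it commutes with $p$, giving $pa=ap$. The identity $a^{n+1}a^D=a^n$, which holds since ${\rm ind}(a)=n$, implies $a^np=pa^n=0$, and expanding $up$ and $pu$ yields $up=pu=-p$. For invertibility of $u$, I would propose $u^{-1}=(a^D)^n-1+aa^D$ and verify by multiplying out, using the identities $a^n(a^D)^n=aa^D$, $a^n\cdot aa^D=a^n$, and $aa^D\cdot a^D=a^D$; after cancellations both $u\cdot u^{-1}$ and $u^{-1}\cdot u$ collapse to $1$.

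For $(2)\Rightarrow(1)$, set $q:=1-p$. From $up=pu=-p$ and $p^2=p$ one directly obtains $a^np=pa^n=0$, hence $a^n=qa^nq=uq=qu$, with $u$ commuting with $q$ (since $u$ commutes with $p$). In the corner ring $q\mathcal{R}q$ with identity $q$, the element $uq$ is a unit with inverse $u^{-1}q$, because $q$ commutes with $u^{\pm1}$. Since $q$ commutes with $a$, one computes $(qa)^n=q^na^n=a^n=uq$; so $(qa)^n$ is a unit in $q\mathcal{R}q$, and then $qa$ itself is a unit there, with inverse $b\in q\mathcal{R}q$. Using $bp=pb=0$ (from $b\in q\mathcal{R}q$) and $ap=pa$, short computations yield $ab=aqb=qab=q$ and similarly $ba=q$, then $bab=bq=b$, and $a^{n+1}b=a\cdot a^nb=a\cdot(qa)^{n-1}=qa^n=a^n$. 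Thus $b$ satisfies the defining Drazin equations, so $b=a^D$ and $aa^D=q=1-p$. As $p$ is Hermitian, so is $aa^D$, and Lemma~\ref{4} gives that $a$ is $*$-DMP.

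For uniqueness, both directions force $p=1-aa^D$, a quantity determined by $a$ alone, and then $u=a^n-p$ is also determined; comparing two decompositions thus gives $p=q$ and $u=v$. The main obstacle is verifying invertibility of $u$ in $(1)\Rightarrow(2)$, and dually the invertibility of $qa$ in the corner ring in $(2)\Rightarrow(1)$, without circularly invoking $a^D$: these have to be read off purely from the three relations $a^n=u+p$, $pa=ap$, $up=pu=-p$ and the Drazin identity $a^{n+1}a^D=a^n$.
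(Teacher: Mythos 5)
Your proposal is correct, but only the first half coincides with the paper's argument. For $(1)\Rightarrow(2)$ you take the same decomposition as the paper: the paper's $x$ from Theorem \ref{theo5} is exactly $a^D$, so its $u=a^n-1+ax$, $p=1-ax$ and $u^{-1}=x^n-1+ax$ are literally your $a^n-1+aa^D$, $1-aa^D$ and $(a^D)^n-1+aa^D$, and your verification of $u u^{-1}=u^{-1}u=1$ via $a^n(a^D)^n=aa^D$, $a^{n+1}a^D=a^n$, $aa^Da^D=a^D$ is the computation the paper leaves implicit. For $(2)\Rightarrow(1)$ the routes diverge: the paper exhibits the explicit element $x=u^{-1}(1-p)a^{n-1}$ and feeds it into the equational characterization of Theorem \ref{theo5}, whereas you pass to the corner ring $q\mathcal{R}q$ with $q=1-p$, observe that $(qa)^n=uq$ is a unit there, invert $qa$ to produce $b$ with $ab=ba=q$, $bab=b$, $a^{n+1}b=a^n$, conclude $b=a^D$ and $aa^D=q$ Hermitian, and invoke Lemma \ref{4}. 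Your version is slightly longer but more self-contained (the paper's appeal to Theorem \ref{theo5} tacitly requires $ax^2=x$, which it does not check, while your argument needs only Lemma \ref{4}), and it yields the stronger byproduct that \emph{any} decomposition satisfying (2) forces $1-p=aa^D$. That byproduct is precisely what makes your uniqueness argument work and legitimately replaces the paper's direct two-decomposition computation $q=pq$, $p=qp$, $p=(pq)^*=q$; just make sure you state explicitly that uniqueness of the Drazin inverse is what pins down $p$, and hence $u=a^n-p$.
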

\begin{proof} $(1)\Rightarrow(2)$. Assume that
there exists $x\in \mathcal{R}$ such that
$axa^n = a^n$, $ax^2=x$ and $(ax)^{\ast}=xa$. Let $u=a^n-1+ax$ and $p=1-ax$.
We can verify that $u$ is invertible and $u^{-1}=x^n-1+ax$.
Also, we observe that $a^n=u+p$, $up=pu=-p$, $pa=ap$ and $p=p^2=p^*$.

$(2)\Rightarrow(1)$. Let there exist $u\in U(\mathcal{R})$ and $p\in P(\mathcal{R})$ such that $a^n=u+p$, $pa=ap$ and $up=-p$.
Since $a$ commutes with $p$ and $a^n=u+p$, we deduce that $a$ commutes with $u$.
Set $x=u^{-1}(1-p)a^{n-1}$. Then
$xa=u^{-1}(1-p)a^{n}=u^{-1}(1-p)(u+p)=u^{-1}(1-p)u=u^{-1}u(1-p)=1-p\in P(R),$ $(ax)^*=(xa)^*=xa=1-p$ and
$axa^n=(1-p)(u+p)=u-pu=u-(-p)=a^n$.
By Theorem \ref{theo5}, $a$ is a *-DMP element.

To verify that $u$ and $p$ are unique, suppose that there exist
$u,v\in U(\mathcal{R})$ and $p,q\in P(\mathcal{R})$ such that $a^n= u + p=v+q$, $pa=ap$, $qa=aq$, $up=pu=-p$ and $vq=qv=-q$.
Because $a^nq=(v+q)q=vq+q=-q+q=0$ and $p=-u^{-1}p=-u^{-1}(a^n-u)=-u^{-1}a^n+1$, we obtain $(1-p)q=u^{-1}aq^n=0$ and thus $q=pq$. In a similar way, we get
$p=qp$. Hence, $p=qp=q^*p^*=(pq)^*=q^*=q$ and $u=a^n-p=a^n-q=v$.
\end{proof}

\begin{proposition}
Let $a\in \mathcal{R}$ and ${\rm ind}(a)=n$. Then the following statements are equivalent:

(1) $a$ is a *-DMP element.

(2) There exist $u\in U(\mathcal{R})$ and $q\in P(\mathcal{R})$ such that $a^n=uq=qu$ and $aq=qa$.

In addition, $q$ is unique.
\end{proposition}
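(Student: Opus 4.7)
The plan is to parallel closely the proof of Proposition \ref{prop6}, working with the EP element $a^n$ in place of the EP element $a$ used there. Since $a$ is $*$-DMP with ${\rm ind}(a)=n$, Lemma \ref{4} gives that $aa^D$ is Hermitian, and we have $(a^n)^{\sharp}=(a^n)^{\dag}=(a^D)^n$ together with $aa^D=a^n(a^n)^{\sharp}$. For $(1)\Rightarrow(2)$ I set $q:=aa^D$ (a projection) and $u:=a^n-1+q$. The invertibility of $u$ is verified by a direct computation showing that $(a^n)^{\sharp}-1+q$ is a two-sided inverse; this uses only $qa^n=a^nq=a^n$, $q(a^n)^{\sharp}=(a^n)^{\sharp}q=(a^n)^{\sharp}$, and $q^2=q$. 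The identities $uq=qu=a^n$ are then immediate, and $aq=qa$ holds because $aa^D$ commutes with $a$.

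For $(2)\Rightarrow(1)$, given a decomposition $a^n=uq=qu$ with $u\in U(\mathcal{R})$, $q\in P(\mathcal{R})$ and $aq=qa$, I set $x:=qu^{-1}$. The hypothesis $uq=qu$ yields $u^{-1}q=qu^{-1}$, so $x=qu^{-1}=u^{-1}q$. Direct substitution gives
\[
a^nx=uq\cdot qu^{-1}=uq^2u^{-1}=uqu^{-1}=q, \qquad xa^n=qu^{-1}\cdot uq=q^2=q,
\]
and hence $xa^nx=qx=q^2u^{-1}=qu^{-1}=x$ and $a^nxa^n=qa^n=q\cdot qu=qu=a^n$. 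Since $q=q^*$, both $a^nx$ and $xa^n$ are Hermitian, so $x$ is simultaneously the Moore--Penrose and the group inverse of $a^n$. Therefore $a^n$ is EP, and by Lemma \ref{4} the element $a$ is $*$-DMP.

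Uniqueness of $q$ follows the pattern already used in Proposition \ref{prop6}. Suppose $a^n=uq=qu=vp=pv$ with $u,v\in U(\mathcal{R})$ and $p,q\in P(\mathcal{R})$. Multiplying $a^n=vp$ on the right by $p$ gives $a^np=vp^2=a^n$, whence $qp=u^{-1}a^np=u^{-1}a^n=q$; symmetrically $pq=p$. Taking adjoints and using $p=p^*$, $q=q^*$ we obtain $p=(qp)^*=q^*=q$.

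No serious obstacle is expected, since every step is either a direct algebraic manipulation or a routine application of the EP/Drazin calculus developed above. The only piece that needs to be written out with a bit of care is the invertibility of $u=a^n-1+q$ in $(1)\Rightarrow(2)$; the candidate inverse is dictated by the pattern used in Propositions \ref{prop5} and \ref{prop4.3}, and its verification reduces to the three identities $qa^n=a^nq=a^n$, $q(a^n)^{\sharp}=(a^n)^{\sharp}q=(a^n)^{\sharp}$ and $q^2=q$.
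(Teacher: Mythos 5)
Your proof is correct, and both the decomposition itself ($q=aa^{D}$, $u=a^{n}-1+aa^{D}$) and the uniqueness argument coincide with the paper's; the genuine difference lies in the direction $(2)\Rightarrow(1)$. There the paper takes $x=a^{n-1}u^{-1}q$ and feeds it into the equational characterization of Theorem \ref{theo5} (i.e.\ verifies $axa^{n}=a^{n}$, $ax^{2}=x$, $(ax)^{\ast}=xa$), whereas you take $x=qu^{-1}=u^{-1}q$ and check directly that it is simultaneously the group and Moore--Penrose inverse of $a^{n}$, so that $a^{n}$ is EP and $a$ is *-DMP by the definition (your appeal to Lemma \ref{4} also works, since ${\rm ind}(a)=n$ gives $aa^{D}=a^{n}x=q=q^{\ast}$). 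Your route is more self-contained and, notably, never needs the hypothesis $aq=qa$ in that direction; the paper's choice of $x$ does require commuting $a$ past $q$ and $u^{-1}$ to obtain $xa=ax$, a point its verification glosses over. What the paper's route buys is an explicit Drazin-inverse-like element of $a$ itself and a reuse of Theorem \ref{theo5}; what yours buys is a cleaner, purely computational verification at the level of $a^{n}$, parallel to Proposition \ref{prop6}.
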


\begin{proof}
$(1)\Rightarrow(2)$. If there exists $x\in \mathcal{R}$ such that
$axa^n = a^n$, $ax^2=x$ and $(ax)^{\ast}=xa$, we denote by $u=a^n-1+ax$ and $p=1-ax$.
Then, by Proposition \ref{prop4.3}, we have $u\in U(\mathcal{R})$ and $q\in P(\mathcal{R})$.
Also, we get that $a^n=uq=qu$ and $aq=qa$.

$(2) \Rightarrow (1)$. Suppose that there exist $u\in U(\mathcal{R})$ and $q\in P(\mathcal{R})$ such that $a^n=uq=qu$ and $aq=qa$.
Let $x = a^{n-1}u^{-1}q$. Then $ax=a^{n}u^{-1}q=quu^{-1}q=q=xa$ and $axa^n=q^u=a^n$.
By Theorem \ref{theo5}, one can see $a$ is a *-DMP element.

In order to prove that $q$ is unique, assume that there exist $u,v\in U(\mathcal{R})$ and $p,q\in P(\mathcal{R})$ such that $aq=qa$, $ap=pa$ and $a^n=qu=uq=vp=pv$.
From $a^nq=uq=a^n$ and $p=v^{-1}a^n$, we have $p(1-q)=v^{-1}a^n(1-q)=0$. Hence, $p=pq$ and similarly
$q=qp$. Now, we obtain $p=pq=p^*q^*=(qp)^*=q^*=q$.
\end{proof}

\newpage

\includegraphics[width=6.5in]{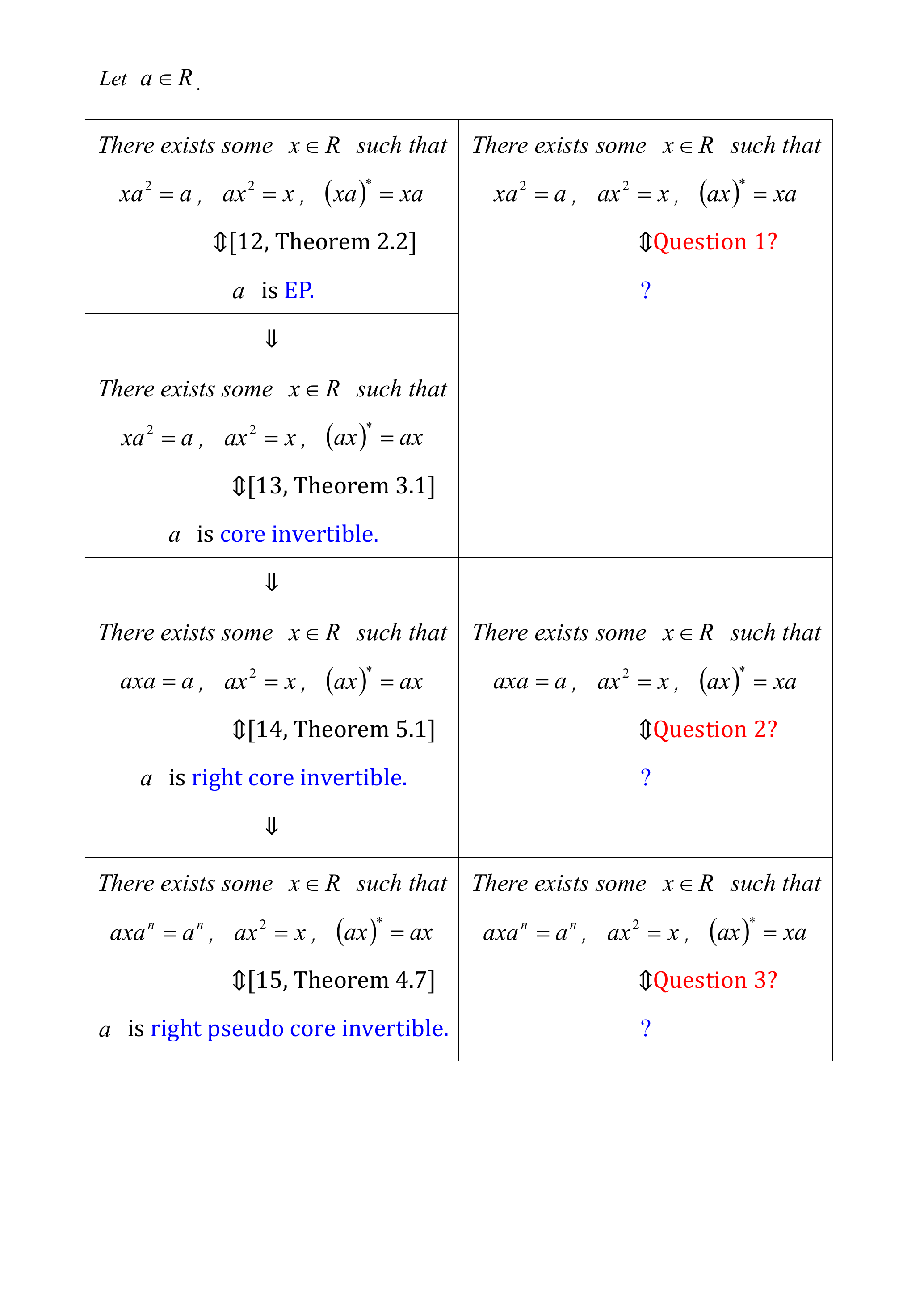}
\vspace{1mm}

\end{document}